\newtheorem{thm}{Theorem}
\newtheorem{lem}[thm]{Lemma}
\theoremstyle{definition}
\newtheorem{defn}[thm]{Definition}
\theoremstyle{remark}
\newtheorem{rem}[thm]{Remark}
\newtheorem{prop}[thm]{Proposition}
\newtheorem{cor}[thm]{Corollary}
\newtheorem{remark}[thm]{Remark}
\newtheorem{example}[thm]{Example}
\newcommand{\RR}{\mathbb{R}}
\newcommand{\CC}{\mathbb{C}}
\newcommand{\pts}{\mathcal P}
\newcommand{\incidences}{\mathcal I}
\newcommand{\sphrs}{\mathcal S}
\newcommand{\BZ}{\mathbf{Z}}
\newcommand{\reali}{\operatorname{Reali}}
\begin{document}
\title[An improved bound on point-surface incidences in $\RR^3$]{An improved bound on the number of point-surface incidences in three dimensions}
\author[J.~Zahl]{Joshua Zahl}%
\address{Department of Mathematics, UCLA, Los Angeles CA 90095-1555, USA}
\email{jzahl@math.ucla.edu}

\date{\today}
\begin{abstract} We show that $m$ points and $n$ smooth algebraic surfaces of bounded degree in $\mathbb{R}^3$ satisfying suitable nondegeneracy conditions can have at most $O(m^{\frac{2k}{3k-1}}n^{\frac{3k-3}{3k-1}}+m+n)$ incidences, provided that any collection of $k$ points have at most $O(1)$ surfaces passing through all of them, for some $k\geq 3$. In the case where the surfaces are spheres and no three spheres meet in a common circle, this implies there are $O((mn)^{3/4} + m +n)$ point-sphere incidences. This is a slight improvement over the previous bound of $O((mn)^{3/4} \beta(m,n)+ m +n)$ for $\beta(m,n)$ an (explicit) very slowly growing function. We obtain this bound by using the discrete polynomial ham sandwich theorem to cut $\mathbb{R}^3$ into open cells adapted to the set of points, and within each cell of the decomposition we apply a Turan-type theorem to obtain crude control on the number of point-surface incidences. We then perform a second polynomial ham sandwich decomposition on the irreducible components of the variety defined by the first decomposition. As an application, we obtain a new bound on the maximum number of unit distances amongst $m$ points in $\mathbb{R}^3$.
\end{abstract}
\maketitle
\section{Introduction}
In \cite{Clarkson}, Clarkson, Edelsbrunner, Guibas, Sharir, and Welzl obtained the following bound on the number of incidences between points and spheres in $\RR^3$:
\begin{thm}[Clarkson et al.]\label{clarksonThm}
The number of incidences between $m$ points and $n$ spheres in $\RR^3$ with no three spheres meeting at a common circle is
\begin{equation}\label{clarksonBound}
O( (mn)^{3/4}\beta(m,n) + m + n),
\end{equation}
where $\beta(m,n)$ is a very slowly growing function of $m$ and $n$. In particular, $\beta(m,n)\leq 2^{C\alpha(m^3/n)^2},$ where $\alpha(s)$ is the inverse Ackerman function and $C$ is a large constant.
\end{thm}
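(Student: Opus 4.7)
The plan is to combine a forbidden-subgraph incidence bound with a cutting-based divide-and-conquer argument, in the spirit of Clarkson--Shor. I first observe that the non-degeneracy hypothesis implies the point-sphere incidence bipartite graph contains no copy of $K_{3,3}$: any three non-collinear points determine a unique circle, which by hypothesis lies on at most two spheres, while any three collinear points lie on no common sphere (a sphere meets a line in at most two points). The K\H{o}v\'ari--S\'os--Tur\'an theorem then gives the crude base-case bound
\[I(P,S)=O(mn^{2/3}+n),\]
together with its symmetric counterpart $O(nm^{2/3}+m)$. Outside the range $m^{1/3}\leq n\leq m^3$ these estimates already yield $O(m+n)$, so attention can be restricted to that range.

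Next, for a parameter $r$ to be chosen, I would build a $(1/r)$-cutting of the sphere arrangement $\mathcal A(S)$: a partition of $\RR^3$ into $O(r^3\beta(r))$ relatively open simplicial cells, each meeting at most $n/r$ spheres. Such a cutting is produced by the Clarkson--Shor random-sampling technique (take a random sub-sample of $r$ spheres) combined with a simplicial decomposition of each cell of the resulting sub-arrangement. The overhead $\beta(r)$ reflects the Davenport--Schinzel complexity of a single cell in an arrangement of spheres and is the ultimate source of the inverse-Ackermann factor.

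Applying the base case inside each cell $\Delta$, with $m_\Delta$ interior points and at most $n/r$ crossing spheres, gives $C(m_\Delta(n/r)^{2/3}+n/r)$ incidences there. Summing over cells with $\sum m_\Delta\leq m$ yields
\[I(P,S)\leq C\,m\,n^{2/3}r^{-2/3}+C\,n\,r^2\beta(r).\]
Balancing by $r\approx m^{3/8}n^{-1/8}$ produces the main term $O((mn)^{3/4}\beta(m,n))$. Points lying on the cutting boundary have to be treated separately by a recursive argument on the $2$-faces (reducing to a two-dimensional point-circle incidence problem), but these contribute only lower-order terms.

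The main obstacle is constructing the $(1/r)$-cutting with essentially cubic complexity. Because spheres in $\RR^3$ form a three-parameter family and the worst-case arrangement has $\Theta(n^3)$ vertices, a naive cutting would be far too coarse. Achieving size $O(r^3\beta(r))$ requires the single-cell complexity bounds for sphere arrangements, governed by generalized Davenport--Schinzel sequences, and this is precisely what produces the explicit factor $\beta(m,n)\leq 2^{C\alpha(m^3/n)^2}$ in the final estimate.
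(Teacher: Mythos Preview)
The paper does not prove Theorem~\ref{clarksonThm}; it is quoted from \cite{Clarkson} as the prior result that Theorem~\ref{mainThm} improves upon. The only thing the paper says about its proof is the one-sentence sketch in \S1.3: Clarkson et al.\ use a ``Canham threshold plus divide and conquer'' technique, partitioning $\RR^3$ with a decomposition adapted to the \emph{spheres} and applying a Tur\'an-type bound in each piece.

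Your proposal is a faithful reconstruction of exactly that strategy. You correctly derive the $K_{3,3}$-freeness of the incidence graph from the hypothesis, apply K\H{o}v\'ari--S\'os--Tur\'an for the base case, build a $(1/r)$-cutting of the sphere arrangement via Clarkson--Shor sampling, and balance to get the main term. You also correctly identify the source of the $\beta(m,n)$ factor as the Davenport--Schinzel overhead in the single-cell complexity of sphere arrangements, which is precisely the point the present paper is making: by switching to a polynomial partition adapted to the \emph{points} rather than a cutting adapted to the spheres, one avoids that overhead entirely. One terminological note: what the paper calls the ``Canham threshold'' is the standard step of stripping off rich spheres (or points) before cutting; you achieve the same effect by restricting to the range $m^{1/3}\le n\le m^3$ and invoking both symmetric forms of K\H{o}v\'ari--S\'os--Tur\'an, which is an equivalent bookkeeping.
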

We obtain the following slight sharpening:
\begin{thm}\label{mainThm}
Let $k\geq 3,$ and let $\pts\subset\RR^3$ be a collection of $m$ points and $\sphrs$ a collection of $n$ smooth algebraic surfaces of bounded degree (the degree is allowed to depend on $k$) such that for some constant $C$ we have $|S\cap S^\prime\cap S^{\prime\prime}|\leq C$ for all $S,S^\prime,S^{\prime\prime}\in\sphrs$, and for any collection of $k$ points in $\RR^3$, there are at most $C$ surfaces that contain all $k$ points. Then the number of incidences between points in $\pts$ and surfaces in $\sphrs$ is
\begin{equation}\label{mySurfaceBound}
O(m^{\frac{2k}{3k-1}}n^{\frac{3k-3}{3k-1}}+m+n),
\end{equation}
where the implicit constant depends only on $k$, $C$, and the degree of the algebraic surfaces.

In particular, the number of incidences between $m$ points and $n$ spheres in $\RR^3$ with no three spheres meeting at a common circle is
\begin{equation}\label{mySphereBound}
O( (mn)^{3/4} + m + n).
\end{equation}
\end{thm}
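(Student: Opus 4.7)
The plan is to combine the polynomial ham sandwich theorem with the Kővári–Sós–Turán inequality, iterated twice to exploit the three-dimensional ambient space. The hypothesis that any $k$ points lie on at most $C$ surfaces makes the point-surface incidence graph $K_{k,C+1}$-free, so Kővári–Sós–Turán already yields the crude global bound $O(m^{1-1/k}n + n)$; the goal is to refine this by partitioning $\RR^3$ into small pieces. For a parameter $r$ to be chosen at the end, invoke the discrete polynomial ham sandwich theorem to produce a polynomial $P$ of degree $D = O(r^{1/3})$ so that each of the $O(r)$ open connected cells of $\RR^3 \setminus Z(P)$ contains at most $m/r$ points of $\pts$. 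A surface $S \in \sphrs$ not contained in $Z(P)$ enters only $O(D^2)$ cells by a standard Bezout/Milnor argument on smooth bounded-degree surfaces, so $\sum_\Omega |\sphrs_\Omega| \leq O(nD^2) = O(nr^{2/3})$. Summing the KST estimate $O((m/r)^{1-1/k} |\sphrs_\Omega| + |\sphrs_\Omega|)$ over cells $\Omega$ yields an off-variety contribution of $O(m^{1-1/k} n\, r^{1/k - 1/3} + n r^{2/3})$.

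The crucial step is bounding the incidences on $Z(P)$, and this is where the second polynomial ham sandwich decomposition enters. Decompose $Z(P)$ into its two-dimensional irreducible components $V_1, \ldots, V_s$ with $\sum \deg V_i \leq D$; points on lower-dimensional components contribute only $O(nD)$ incidences by Bezout. On each $V_i$, apply polynomial partitioning intrinsic to the surface: for a parameter $r_i$, there is a polynomial $Q_i$ of degree $E_i = O(\sqrt{r_i})$ whose intersection with $V_i$ cuts $V_i$ into $O(r_i)$ cells, each containing at most $m_i/r_i$ points of $\pts \cap V_i$. A surface $S \in \sphrs$ distinct from $V_i$ meets $V_i$ in an algebraic curve of bounded degree that enters $O(E_i)$ cells of $V_i$, and the $K_{k,C+1}$-free condition descends verbatim to points on $V_i$ and these cut curves (a $(C+1)$-tuple of cut curves through $k$ common points would produce $C+1$ original surfaces through those points). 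Reapplying KST inside each sub-cell of $V_i$, together with a direct handling of incidences on the one-dimensional cut locus $Z(Q_i) \cap V_i$, delivers a per-component bound of roughly $O(m_i^{k/(2k-1)} n^{(2k-2)/(2k-1)})$ in the style of Pach--Sharir. Summing over $i$ by Hölder, using $\sum m_i \leq m$, $\sum n_i \leq nD$, and $s \leq D$, gives an on-variety total of $O(m^{k/(2k-1)} n^{(2k-2)/(2k-1)} r^{(k-1)/(3(2k-1))})$.

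Balancing the dominant cell term $n r^{2/3}$ against this on-variety contribution leads to the optimal choice $r \sim m^{3k/(3k-1)} n^{-3/(3k-1)}$, which produces the claimed exponents $\frac{2k}{3k-1}$ and $\frac{3k-3}{3k-1}$. I expect the main obstacle to be the bookkeeping around three exceptional contributions: (i) surfaces $S \in \sphrs$ that coincide with an irreducible component $V_i$ of $Z(P)$ — there are only $O(D)$ such surfaces, and their incidences must be extracted and bounded separately using the triple-intersection hypothesis $|S \cap S' \cap S''| \leq C$; (ii) coincidences among the cut curves, where $S \cap V_i = S' \cap V_i$ for distinct $S, S' \in \sphrs$ would require multiplicity accounting but is controlled because such a coincidence forces a curve contained in $S \cap S'$, a set of bounded degree by Bezout; and (iii) points lying on the one-dimensional singular stratum of $Z(P)$, where the two-dimensional partitioning on individual $V_i$ does not act, and which must be handled by a direct curve-incidence argument. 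The additive $m+n$ terms in the final bound absorb the degenerate parameter regimes in which $r$ falls outside the interval $[1,m]$ where the polynomial partition is useful.
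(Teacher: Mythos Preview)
Your overall architecture matches the paper's: a first polynomial partition in $\RR^3$, K\H{o}v\'ari--S\'os--Tur\'an inside the cells, then a second partition on the irreducible components of $Z(P)$, and a H\"older step to recombine. But two of your quantitative inputs are wrong, and the first one is fatal as written.

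\emph{Wrong direction in KST.} From the hypothesis ``any $k$ points lie on at most $C$ surfaces'' (i.e.\ the incidence graph is $K_{k,C+1}$-free with $k$ on the point side), KST gives
\[
\incidences(\pts,\sphrs)\ \lesssim\ |\pts|\,|\sphrs|^{1-1/k}+|\sphrs|,
\]
not the bound $|\pts|^{1-1/k}|\sphrs|+|\sphrs|$ that you use. With your version the per-cell estimate sums to $m^{1-1/k}n\,r^{1/k-1/3}+nr^{2/3}$, and for $k=3$ the first term is $m^{2/3}n$, independent of $r$; since we are in the regime $m<cn^3$, this exceeds the target $(mn)^{3/4}$. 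The fix is to use the correct form $m_\Omega n_\Omega^{1-1/k}+n_\Omega$ and then apply H\"older across cells (using $\sum m_\Omega^{\,k}\le r(m/r)^k$ and $\sum n_\Omega\lesssim nr^{2/3}$), which yields $mn^{1-1/k}r^{-(k-1)/(3k)}+nr^{2/3}$ and balances correctly at your value of $r$.

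\emph{The components $V_i$ do not have bounded degree.} You assert that $S\cap V_i$ is a curve of bounded degree entering $O(E_i)$ cells, and that a degree-$E_i$ polynomial cuts $V_i$ into $O(E_i^2)$ pieces. Both are false: $\deg V_i=D_i$ can be as large as $D$, the curve $S\cap V_i$ has degree $O(D_i)$, a degree-$E_i$ polynomial cuts $V_i$ into $O(D_iE_i^2)$ pieces, and a surface enters $O(D_iE_i)$ of them. Consequently you cannot invoke Pach--Sharir as a black box on $V_i$: neither the ``bounded curve degree'' nor the ``two curves meet in $O(1)$ points'' hypothesis holds (two curves $S\cap V_i$ and $S'\cap V_i$ may share components). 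The paper tracks these $D_i$ factors and obtains the per-component bound $m_i^{k/(2k-1)}n^{(2k-2)/(2k-1)}D_i^{(k-1)/(2k-1)}$; summing this via H\"older over $\sum D_i\le D$ (not over $s\le D$) produces the same $D^{(k-1)/(2k-1)}$ you arrive at, so your final on-variety exponent is accidentally correct, but the derivation is not. The paper also needs a genuinely careful argument (complexification and B\'ezout over $\CC$) to handle the points on the one-dimensional locus $Z_j\cap W_j$, precisely because the real intersection can be badly behaved; this is not something that can be waved away.
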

\begin{rem}
 The requirements that every three surfaces meet in $C$ points and that every $k$ points have at most $C$ surfaces passing through them are analogous to the definition of ``curves with $k$ degrees of freedom'' from \cite{Pach}, though in \cite{Pach} the curves do not need to be algebraic.
\end{rem}

\begin{rem}
The requirement that every three surfaces meet in a complete intersection, or some variant thereof, is necessary to prevent the situation in which all of the surfaces meet in a common curve and all of the points lie on that curve, yielding $mn$ incidences (i.e.~if we don't place any restrictions on how the surfaces can intersect, then the trivial bound of $mn$ incidences is sharp).
\end{rem}
\begin{rem}
Theorem \ref{clarksonThm} can be extended to the more general case of bounded degree algebraic surfaces using the decomposition techniques described in \cite[\S 8.3]{Sharir} to obtain an
analogue of \eqref{mySurfaceBound}. Doing so yields a bound of $O(m^{\frac{2k}{3k-1}}n^{\frac{3k-3}{3k-1}}\beta(m,n)+m+n)$ for $\beta$ a slowly growing function.
\end{rem}

\subsection{Previous results}
Similar results to Theorem \ref{clarksonBound} and \ref{mySurfaceBound} have been obtained by \L{}aba and Solymosi in \cite{Laba} and by Iosevich, Jorati, and \L{}aba in \cite{Iosevich}. In \cite{Laba} and \cite{Iosevich}, however, the authors consider a more general class of surfaces (they need not be algebraic), but they require that the point set be ``homogeneous'' in a suitable sense.

Our techniques do not work well when $k=2$, i.e.~for obtaining bounds on point-hyperplane incidences, but this case has been studied by other authors (see e.g.~\cite{Elekes}, where the authors obtain sharp bounds on point-hyperplane incidences under a slightly different set of non-degeneracy conditions).

\subsection{Update 7/4/2011} The author has recently become aware that concurrently with this paper, Kaplan, Matou\v{s}ek, Safernov\'a, and Sharir in \cite{Kaplan2} obtained results similar to the bound \eqref{mySphereBound} using similar methods. Kaplan et.~al.~are able to avoid some of the technical difficulties present in this paper by using an explicit paramaterization of the sphere by rational functions.

\subsection{Proof sketch}
Clarkson et al.~obtain Theorem \ref{clarksonThm} through their ``Canham threshold plus divide and conquer'' technique: the arrangement of spheres in $\RR^3$ is subdivided into smaller collections through a careful partitioning of $\RR^3$, and the number of incidences between these smaller collections of spheres and points is controlled by a Turan-type bound on the number of edges in a bipartite graph with certain forbidden subgraphs.

In this paper, we employ similar ideas, except instead of dividing the problem into smaller subproblems by partitioning $\RR^3$ into cells using a decomposition adapted to the collection of spheres (or more general nonsingular algebraic surfaces), we employ a partition adapted to the collection of points. This partition is obtained from the discrete polynomial ham sandwich theorem recently used to great effect by Guth and Katz in \cite{Guth} and more recently by Solymosi and Tao in \cite{Solymosi} and by Kaplan, Matou\v{s}ek, and Sharir in \cite{Kaplan}. Specifically, we find a polynomial $P$ such that the complement of the zero set of $P$ consists of open ``cells,'' none of which contain too many points. We can then apply a Turan-type bound to the points and surfaces inside each cell. However, some points may lie on the zero set of $P$, and thus do not lie in any of the cells. To deal with these points, we perform a second polynomial ham sandwich decomposition to find a polynomial $Q$ whose zero set partitions the zero set of $P$ into cell-like objects, and we apply the Turan-type bound to each of these ``cells.'' While it is possible that a point could lie in the zero set of both $P$ and $Q$, we can use B\'ezout-type theorems to control how often this can occur.
\subsection{Some difficulties with real algebraic sets}\label{obstructions}
There are several technical difficulties that have to be dealt with while executing the above strategy. In contrast to the situation over $\CC$, there exist polynomials $P_1,\ldots,P_d\in\RR[x_1,\ldots,x_d]$ of degrees $D_1,\ldots,D_d$ such that $\{P_1=0\}\cap\ldots\cap\{P_d=0\}$ contains more than $D_1\ldots D_d$ isolated points, i.e.~the naive analogue of B\'ezout's theorem fails over $\RR$. To deal with this problem, we will sometimes be forced to embed our varieties into $\CC$ and use the (usual) B\'ezout's theorem (though we have to be careful that the intersection of the embedded varieties does not contain new, unexpected components of positive dimension).

A second difficulty concerns the failure of the Nullstellensatz for varieties defined over $\RR$. In contrast to the complex case, If $(P)$ is a principal prime ideal and $Q$ is a real polynomial, it need not be the case that if $Q$ vanishes identically on $\{x\in\RR^d\colon P=0\}$ then $Q\in(P)$. Luckily, there is a special type of ideal known as a ``real ideal'' for which an analogue of the Nullstellensatz does hold. Frequently we will be required to replace our polynomials with new polynomials that generate real ideals.

Finally, if $P\in\RR[x_1,\ldots,x_d]$ then the dimension of $\{x\in\RR^d\colon P=0\}$ may be less than $d-1,$ and even if $P$ is squarefree, $\nabla P$ may vanish on $\{P=0\}$. Again, we can remedy this problem by working with (irreducible) polynomials that generate real ideals.
\subsection{Thanks}
The author is very grateful to Jordan Ellenberg, Larry Guth, Netz Katz, Jozsef Solymosi, and Terence Tao for helpful discussions, and to Haim Kaplan, Ji\v{r}\'i Matou\v{s}ek, Zuzana Safernov\'a, and Micha Sharir for pointing out errors in an earlier version of this paper. Finally, the author would like to thank the anonymous referee for his or her careful proofreading and corrections. The author was supported in part by the Department of Defense through the National Defense Science \& Engineering Graduate Fellowship (NDSEG) Program.
\section{Main Result}
\subsection{Notation}
Throughout the paper, $c$ and $C$ will denote sufficiently small and large constants, respectively, which are allowed to vary from line to line. We will write $A\lesssim B$ to mean $A<CB,$ we will write $A\sim B$ to mean $cB<A<CB,$ and we say that a quantity is $O(A)$ if it is $\lesssim A$.

Let $\sphrs$ be a collection of smooth (real) surfaces and $\pts$ a collection of points. Then $\incidences(\pts,\sphrs)$ is the number of incidences between the surfaces in $\sphrs$ and the points in $\pts$. If $S\in\sphrs$ is a surface, then $f_S$ is the polynomial whose zero set is $S$.

All ideals and varieties will be assumed to be affine. Unless otherwise specified, all ideals are subsets of $\RR[x_1,\ldots,x_d],$ and all varieties are defined over $\RR$ and thus are subsets of $\RR^d$, though sometimes we will specialize to the case $d=3$. If $P$ is a polynomial, $(P)\subset\RR[x_1,\ldots,x_d]$ is the ideal generated by $P$.

Special emphasis will be placed on ``real ideals.'' These are described in Definition \ref{defnOfRealIdeal} of Appendix \ref{algebraicGeometryDefinitions}, and they should not be confused with ideals that are merely subsets of $\RR[x_1,\ldots,x_d]$. On the other hand, a ``real variety'' is merely a variety defined over $\RR$ (as opposed to $\CC$).

If $I$ is an ideal, we use
\begin{equation*}
\BZ(I)=\{x\in\RR^d\colon P(x)=0\ \textrm{for all}\ P\in I\}
\end{equation*}
to denote the zero set of $I$. If $P$ is a polynomial we shall abuse notation and use $\BZ(P)$ to denote $\BZ((P))=\{x\in\RR^d\colon P(x)=0\}$. If $Z\subset\RR^d$ is a real variety, then we define
\begin{equation*}
\mathbf I(Z) = \{P\in\RR[x_1,\ldots,x_d]\colon P(x) = 0\ \textrm{for all}\ x\in Z\}
\end{equation*}
to be the ideal of polynomials that vanish on $Z$.

If $Z\subset\RR^d$ is a real variety, then $Z^*\subset\CC^d$ denotes the smallest complex variety containing $Z$. Conversely, if $Z\subset\CC^d$ is a complex variety, then $\mathfrak{R}(Z)\subset\RR^d$ is its set of real points. %If $p\in\RR^d$ and $Z\subset\CC^d$, we shall abuse notation and say that $p$ lies on $Z$ if $p\in\mathfrak{R}(Z)$.

If $\mathcal Q\subset\RR[x_1,\ldots,x_d]$ is a collection of real polynomials, then we can partition $\RR^d\backslash\bigcup_{Q\in\mathcal Q}\BZ(Q)$ into a collection of open sets such that on each open set, the polynomials from $\mathcal Q$ do not change sign. These sets will be called the \emph{realizations of realizable strict sign conditions of $\mathcal Q$}. Similarly, if $Z\subset\RR^d$ is a variety, then we can consider the restriction of the above open sets to $Z$, and these are called the \emph{realizations of realizable strict sign conditions of $\mathcal Q$ on $Z$}. These notions are defined more precisely in Appendix \ref{algebraicGeometryDefinitions}.

\subsection{Preliminaries}
Following \cite{Clarkson}, we shall need the following Turan-type bound:
\begin{thm}[K\H{o}vari, S\'os, Turan \cite{Turan}] Let $s,t$ be fixed, and let $G=G_1\sqcup G_2$ be a bipartite graph with $|G_1|=m,\ |G_2|=n$ that contains no copy of $K_{s,t}$. Then $G$ has at most $O(nm^{1-1/s} + m)$ edges. Symmetrically, $G$ has at most $O(mn^{1-1/t} + n)$ edges. All implicit constants depend only on $s$ and $t$.
\end{thm}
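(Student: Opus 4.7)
My plan is the standard double-counting argument. For each vertex $v \in G_1$, let $d(v)$ denote its degree. I count ordered pairs $(v, A)$ with $v \in G_1$ and $A$ an $s$-element subset of the neighborhood $N(v) \subseteq G_2$. By the $K_{s,t}$-free hypothesis (with the $s$-side of $K_{s,t}$ placed in $G_2$), for every $s$-element subset $A \subseteq G_2$ there are at most $t-1$ vertices $v \in G_1$ with $A \subseteq N(v)$; otherwise any $t$ such $v$'s together with $A$ would span a copy of $K_{s,t}$. Hence the number of such pairs is at most $(t-1)\binom{n}{s}$, while it also equals $\sum_{v \in G_1}\binom{d(v)}{s}$.

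Next I bound this sum from below by convexity. If the average degree $\bar d = |E|/m$ is less than $s$, then trivially $|E| < sm = O(m)$ and we are done. Otherwise $x \mapsto \binom{x}{s}$ is convex on $[s-1,\infty)$, so Jensen's inequality gives
$$\sum_{v \in G_1}\binom{d(v)}{s} \;\geq\; m\binom{\bar d}{s} \;\gtrsim\; m\,\bar d^{\,s} \;=\; |E|^s / m^{s-1}.$$
Combining with the upper bound $(t-1)\binom{n}{s} = O(n^s)$ yields $|E|^s \lesssim m^{s-1} n^s$, hence $|E| \lesssim n m^{1-1/s}$. Merging the two cases gives $|E| = O(nm^{1-1/s} + m)$.

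For the symmetric bound I would repeat the argument with the roles of $G_1$ and $G_2$ exchanged: count pairs $(u,B)$ with $u \in G_2$ and $B$ a $t$-subset of $N(u) \subseteq G_1$. The $K_{s,t}$-free hypothesis (in the opposite orientation, with the $t$-side now in $G_1$) forces each such $B$ to have fewer than $s$ common neighbors in $G_2$, and the same convexity estimate now yields $|E| = O(mn^{1-1/t} + n)$.

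Nothing here is genuinely difficult; the only point requiring a bit of care is the use of Jensen's inequality, which is why I first split off the small-average-degree regime. Everything else is routine manipulation of binomial coefficients and an appeal to the convexity of $\binom{x}{s}$.
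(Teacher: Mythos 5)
The paper cites this theorem to the original reference of K\H{o}vari, S\'os and Turan and does not supply a proof, so there is no in-paper argument to compare against. Your proof is the standard and correct double-counting proof: counting pairs $(v,A)$ with $A$ an $s$-subset of $N(v)$, bounding above by the $K_{s,t}$-free hypothesis and below by convexity of $x\mapsto\binom{x}{s}$, with the split on $\bar d\lessgtr s$ to handle the degenerate regime and justify $\binom{\bar d}{s}\gtrsim_s\bar d^s$. One small point worth being explicit about, should you ever write this up: to apply Jensen you want the function extended to be $0$ for $x<s-1$, so that it is convex on all of $\RR$ and agrees with $\binom{d}{s}$ at every nonnegative integer $d$; and the estimate $\binom{\bar d}{s}\gtrsim_s\bar d^s$ for $\bar d\geq s$ requires checking the two sub-ranges $\bar d\geq 2s$ (where each factor $\bar d-i\geq\bar d/2$) and $s\leq\bar d<2s$ (where both sides are bounded by constants depending only on $s$). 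Your implicit assumption on the orientation of the forbidden $K_{s,t}$ --- $s$-side in $G_2$, $t$-side in $G_1$ --- is used consistently in both halves of the argument and matches how the theorem is invoked in \eqref{canhamThreshholdForSpheres1} and \eqref{canhamThreshholdForSpheres2}, so the symmetric bound really does follow by exchanging the roles as you describe.
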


In our case, we have that $|S\cap S^\prime\cap S^{\prime\prime}|\leq C$ for every three surfaces $S,S^\prime,S^{\prime\prime}$, and any $k$ points have at most $C$ surfaces passing through all of them. Thus we have the bounds

\begin{align}
\incidences(\pts,\sphrs)&\lesssim |\pts||\sphrs|^{1-1/k}+|\sphrs|,
\label{canhamThreshholdForSpheres1}\\
\incidences(\pts,\sphrs)&\lesssim |\pts|^{2/3}|\sphrs|+|\pts|.
\label{canhamThreshholdForSpheres2}
\end{align}

Recall the discrete polynomial partitioning theorem from \cite{Guth}:
\begin{thm}\label{hamSandwichThm}
Let $\pts$ be a collection of points in $\RR^d$, and let $D>0$. Then there exists a non-zero polynomial $P$ of degree at most $D$ such that each connected component of $\RR^d\backslash\BZ(P)$ contains at most $O(|\pts|/D^d)$ points of $\pts$.
\end{thm}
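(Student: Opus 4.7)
The plan is to prove Theorem~\ref{hamSandwichThm} by iterated polynomial bisection, following Guth and Katz. The main building block is a discrete bisection lemma: given finite point sets $\pts_1,\ldots,\pts_N\subset\RR^d$ with $N\le\binom{D+d}{d}-1$, there is a nonzero polynomial $P$ of degree at most $D$ with $|\pts_i\cap\{P>0\}|\le|\pts_i|/2$ and $|\pts_i\cap\{P<0\}|\le|\pts_i|/2$ for every $i$. This is a polynomial analogue of the classical ham sandwich theorem and follows from Borsuk--Ulam applied to $N$ antipodally odd real-valued functions on the unit sphere $S^{\binom{D+d}{d}-1}$ sitting inside the space of polynomials of degree at most $D$, with the $i$th function measuring $|\pts_i\cap\{P>0\}|-|\pts_i\cap\{P<0\}|$.

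Next I would iterate: set $P_0=1$, and in round $i\ge 1$ choose $P_i$ of degree $D_i$ that simultaneously bisects each of the (at most $2^{i-1}$) intersections of $\pts$ with the realizations of realizable strict sign conditions of $\{P_1,\ldots,P_{i-1}\}$. The bisection lemma requires $\binom{D_i+d}{d}-1\ge 2^{i-1}$, so we may take $D_i\lesssim 2^{i/d}$. After $j$ rounds, each intersection of $\pts$ with a realization of strict sign conditions of $\{P_1,\ldots,P_j\}$ contains at most $|\pts|/2^j$ points. Taking $P=P_1P_2\cdots P_j$, its total degree is $\sum_{i=1}^{j} D_i\lesssim 2^{j/d}$ (a geometric series dominated by its last term), and choosing $j$ with $2^{j/d}\asymp D$ gives $\deg P\le D$ and $2^j\gtrsim D^d$, so each sign-condition cell contains $O(|\pts|/D^d)$ points.

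Finally I would observe that each connected component of $\RR^d\setminus\BZ(P)$ is contained in a single realization of strict sign conditions of $\{P_1,\ldots,P_j\}$: on such a component none of the $P_i$ vanish (since $\BZ(P_i)\subset\BZ(P)$), so by the intermediate value theorem each $P_i$ has constant sign there. Hence each connected component of $\RR^d\setminus\BZ(P)$ contains $O(|\pts|/D^d)$ points of $\pts$, as required. The main technical obstacle is making the bisection lemma rigorous: the signed-count functions are discontinuous at polynomials that vanish on some $p\in\pts_i$, so Borsuk--Ulam does not apply directly. The standard workaround is to replace each point by a small ball of radius $\eps$, apply the Stone--Tukey polynomial ham sandwich theorem to the resulting uniform measures, then pass to a subsequential limit as $\eps\to 0$ using compactness of the unit sphere of polynomials, verifying that the $\le|\pts_i|/2$ bounds are preserved in the limit (points that land on $\BZ(P)$ do not count in $\{P>0\}$ or $\{P<0\}$, but semicontinuity ensures the bisection bound survives).
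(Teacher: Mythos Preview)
The paper does not supply its own proof of Theorem~\ref{hamSandwichThm}; it simply refers the reader to \cite{Guth}, and your proposal is exactly the Guth--Katz iterated-bisection argument (the bisection step you describe is precisely what the paper later records as Proposition~\ref{discreteHamSandProp}, and the iteration with $D_i\lesssim 2^{i/d}$ is the same scheme used in the proof of Theorem~\ref{variantHamSandwichThm}). So your approach is correct and coincides with the one the paper cites.
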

\begin{rem}
Without loss of generality, we can assume that $P$ is squarefree. Indeed if $P$ is not squarefree then we can replace $P$ by its squarefree part, and the new polynomial still has all of the desired properties.
\end{rem}

\begin{example}\label{polyDecompEx}
Consider the set of 24 points
\begin{equation*}
\pts_1=\{ (0,\pm 1,\pm 1),(0,\pm2,\pm2), (\pm 1,\pm1,\pm1), (\pm2,\pm2,\pm2)\}\subset\RR^3,
\end{equation*}
and let $D=3$. Then the polynomial $P_1(x_1,x_2,x_3)=x_1x_2x_3$ partitions $\RR^3$ into 8 octants, each of which contains 2 points from $\pts_1$.
\end{example}
\begin{rem}
Note as well that in the above example, the 8 points $\{0,\pm1,\pm1\},\ \{0,\pm2,$ $ \pm2\}$ lie on the set $\BZ(P_1)$ and thus they do not lie inside any of the open components of $\RR^3\backslash\BZ(P_1)$. This is not merely a consequence of us choosing $P_1$ poorly; it is an unavoidable phenomena that occurs when performing the discrete polynomial partitioning decomposition. In order to control the number of incidences between points lying on $\BZ(P_1)$ and surfaces in $\sphrs$, we shall have to perform a second polynomial partitioning decomposition ``on'' the surface $\BZ(P_1)$. For technical reasons, we cannot simply consider the complement of our ham sandwich ``cut'' as a union of relatively open subsets of $\BZ(P_1)$. Instead, we need to perform a somewhat more detailed decomposition that partitions $\BZ(P_1)$ into sets that are realizations of realizable strict sign conditions of a certain family of polynomials. This is made precise in the theorem below. See Appendix \ref{algebraicGeometryDefinitions} for the definition of a real ideal, a strict sign condition, and the realization of a strict sign condition.
\end{rem}

\begin{thm}[Discrete polynomial partitioning theorem on a hypersurface]\label{variantHamSandwichThm}
Let $\pts$ be a collection of points in $\RR^d$ lying on the set $Z=\BZ(P)$ for $P$ an irreducible polynomial of degree $D$ such that $(P)$ is a real ideal. Let $\rho>0$ be a small constant, and let $E\geq \rho D$. Then there exists a collection of polynomials $\mathcal Q\subset \RR[x_1,\ldots,x_d]$ with the following properties:
\begin{enumerate}
 \item\label{variantHamSandwichThmItm1} $|\mathcal Q|\leq \log_2 (DE^{d-1})+O(1)$.
 \item\label{variantHamSandwichThmItm2} $\sum_{\mathcal Q}\deg Q \lesssim E$.
 \item\label{variantHamSandwichThmItm3} None of the polynomials in $\mathcal Q$ vanish identically on $Z$.
 \item\label{variantHamSandwichThmItm4} The realization of each of the $O(DE^{d-1})$ strict sign conditions of $\mathcal Q$ on $Z$ contains at most $O(\frac{|\pts|}{DE^{d-1}})$ points of $\pts$.
\end{enumerate}
All implicit constants depend only on $\rho$ and the dimension $d$.
\end{thm}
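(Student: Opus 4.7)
The plan is to mirror the standard iterative-bisection proof of Theorem~\ref{hamSandwichThm}, replacing the ambient polynomial space with the ``polynomials on $Z$'' quotient
\begin{equation*}
V_E := \RR[x_1,\ldots,x_d]_{\leq E}\big/\bigl(P\cdot\RR[x_1,\ldots,x_d]_{\leq E-D}\bigr).
\end{equation*}
I would iteratively pick polynomials $Q_1,\ldots,Q_I$ so that at stage $i$ the polynomial $Q_i$ simultaneously bisects all $2^{i-1}$ currently-existing sign-condition regions, viewed as subsets of $\pts$. After $I=\lceil\log_2(DE^{d-1})\rceil$ stages, the partition has at most $2^I=O(DE^{d-1})$ regions, each containing $\leq|\pts|/2^I=O(|\pts|/(DE^{d-1}))$ points, yielding properties \ref{variantHamSandwichThmItm1} and \ref{variantHamSandwichThmItm4}.

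The key input is a hypersurface analogue of polynomial ham sandwich: if $S_1,\ldots,S_N\subset Z$ are finite with $N<\dim V_E$, then some $Q$ of degree at most $E$, with $Q\notin(P)$, bisects each $S_j$. This follows from the Borsuk--Ulam theorem applied to the odd map from the unit sphere of $V_E$ to $\RR^N$ sending (a suitable smoothing of) a class $[Q]$ to $\bigl(\sum_{s\in S_j}\operatorname{sgn}Q(s)\bigr)_{j=1}^N$; the map descends to $V_E$ because the values of $Q$ on $Z=\BZ(P)$ are unchanged when $Q$ is modified by an element of $(P)$. Because $(P)$ is a real ideal, the real Nullstellensatz gives $\mathbf{I}(Z)=(P)$, so $Q\notin(P)$ is equivalent to $Q$ not vanishing identically on $Z$, giving property \ref{variantHamSandwichThmItm3}. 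A direct binomial calculation gives
\begin{equation*}
\dim V_E = \binom{E+d}{d}-\binom{E-D+d}{d}
\end{equation*}
(with the second term interpreted as $0$ when $E<D$), and a case analysis on $E\geq D$ versus $\rho D\leq E<D$ shows this is at least $c(\rho,d)\,DE^{d-1}$.

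At stage $i$, I would apply the hypersurface ham sandwich to the $2^{i-1}$ current regions, choosing $E_i$ as the smallest positive integer with $c(\rho,d)\,DE_i^{d-1}\geq 2^{i-1}+1$, so $E_i\lesssim(2^{i-1}/D)^{1/(d-1)}$ except at the earliest stages, where the ceiling forces $E_i=1$. Summing,
\begin{equation*}
\sum_{i=1}^I \deg Q_i \;\lesssim\; \log D + \sum_{i=1}^I \bigl(2^{i-1}/D\bigr)^{1/(d-1)} \;\lesssim\; \log D + E \;\lesssim\; E,
\end{equation*}
the geometric series being dominated by its last term $\sim(2^{I-1}/D)^{1/(d-1)}\sim E$, and the $\log D$ absorbed using $E\geq\rho D$. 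This verifies property \ref{variantHamSandwichThmItm2}.

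The main obstacle is the Hilbert function estimate $\dim V_E\gtrsim DE^{d-1}$, which must be verified uniformly across both the range $E\geq D$ and the low range $\rho D\leq E<D$; this is the sole place where the hypothesis $E\geq\rho D$ and the $\rho$-dependence in the implicit constants enter the argument. The real-ideal hypothesis plays a more modest but indispensable role, used only to convert ``$Q\notin(P)$'' into ``$Q$ does not vanish identically on $Z$'' in the verification of property \ref{variantHamSandwichThmItm3}.
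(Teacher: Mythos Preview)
Your approach is essentially the paper's: both iterate a hypersurface version of the ham-sandwich bisection, working in the space of degree-$\leq E$ polynomials modulo multiples of $P$, and both invoke the real-ideal hypothesis precisely to convert $Q\notin(P)$ into $Q|_Z\not\equiv 0$. The paper packages the bisection step as a separate lemma and works in homogeneous coordinates, but these differences are cosmetic.

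There is, however, a gap in your degree bookkeeping at the intermediate stages. You establish $\dim V_E\geq c(\rho,d)\,DE^{d-1}$ only in the range $E\geq\rho D$, and then at stage $i$ you choose $E_i$ by the rule $c(\rho,d)\,DE_i^{d-1}\geq 2^{i-1}+1$. But for most stages $i$ this produces an $E_i$ well below $\rho D$, where your dimension lower bound is simply unavailable: when $E_i<D$ one has $\dim V_{E_i}=\binom{E_i+d}{d}\sim E_i^d$, not $DE_i^{d-1}$. Concretely, your ``$E_i=1$'' prescription for the earliest stages (those with $2^{i-1}\lesssim D$) asks a linear polynomial to bisect $2^{i-1}$ point sets, which is impossible once $2^{i-1}>d+1$. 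The correct two-regime estimate is $\dim V_E\gtrsim\min(E^d,DE^{d-1})$, forcing
\[
\deg Q_i\;\lesssim\;\max\bigl(2^{i/d},\,(2^i/D)^{1/(d-1)}\bigr),
\]
which is what the paper uses. Summing the additional $2^{i/d}$ term over $i\leq\log_2(DE^{d-1})$ contributes $(DE^{d-1})^{1/d}\lesssim_\rho E$, so the conclusion in item~\ref{variantHamSandwichThmItm2} is unchanged; but as written your argument does not actually produce polynomials $Q_i$ of the degrees you claim.
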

We shall defer the proof of Theorem \ref{variantHamSandwichThm} to Appendix \ref{variantHamSandwichThmAppendix}. In our applications, we will always have $d=3$.
\begin{example}\label{polyDecompEx2}
Let us continue Example \ref{polyDecompEx}. The polynomial $P_1$ from Example \ref{polyDecompEx} was not irreducible, but we can factor it into the three irreducible factors $x_1,x_2,x_3$. All of the points lying on $\BZ(P_1)$ actually lie on the irreducible component $\BZ(x_1)$, so we let $P_2(x_1,x_2,x_3) = x_1$. Note that $(P_2)=(x_1)$ is a real ideal and let $D=\deg(P_2)=1$. Select $E=2$ (which is larger than $D$). Then the collection of polynomials $\mathcal Q=\{x_2,x_3\}$ satisfies the requirements of Theorem \ref{variantHamSandwichThm}. The realizations of realizable strict sign conditions of $\mathcal Q$ on $Z$ are the 4 sets of the form
\begin{equation}
\{(x_1,x_2,x_3)\colon x_1=0,\ \pm x_2>0,\ \pm x_3>0\}.
\end{equation}
Note that each of these sets contains $2$ points of $\pts_1\cap\BZ(P_2)$. Two coincidences occur in this example that are not present in general. First, in this example the realizations of the four strict sign conditions of $\mathcal Q$ on $Z$ correspond to the four connected components of $Z\backslash\bigcup_{\mathcal Q}\BZ(Q)$. In general, each realization of a strict sign condition may be a union of multiple connected components of $Z\backslash\bigcup_{\mathcal Q}\BZ(Q)$. Second, each of the polynomials in $\mathcal Q$ were irreducible factors of $P_1$. In general this does not occur.
\end{example}

We are now ready to prove Theorem \ref{mainThm}.

\begin{center}
\end{center}
\subsection{Proof of Theorem \ref{mainThm}}
\begin{proof} Let $\sphrs$ and $\pts$ be as in the statement of Theorem \ref{mainThm}. From \eqref{canhamThreshholdForSpheres1} and \eqref{canhamThreshholdForSpheres2}, we have that if $n>cm^k$ or $m> cn^3$ for some fixed small constant $c>0$ to be specified later, then Theorem \ref{mainThm} immediately holds. Thus we may assume
\begin{equation}\label{mAndMNotTooDifferent}
\begin{split}
n&< cm^k,\\
m&<cn^3.
\end{split}
\end{equation}

Let $P$ be a squarefree polynomial of degree at most $D$ ($D$ will be determined later, but the impatient reader can jump to \eqref{valueOfD}) that cuts $\RR^3$ into $\sim D^3$ cells with $O(m/D^3)$ points in each cell, and let $Z=\BZ(P)$. Let $m_i$ be the number of points lying in the $i$--th cell of the above decomposition, and let $n_i$ be the number of surfaces that meet the interior of the $i$--th cell.
\begin{lem}\label{sumOfNiLemma}
\begin{equation}
\sum n_i\lesssim D^2 n.
\end{equation}
\end{lem}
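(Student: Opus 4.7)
The plan is to bound, for each surface $S \in \sphrs$ separately, the number of cells of $\RR^3 \setminus Z$ that $S$ meets, and then sum over the $n$ surfaces. If I can show that each surface meets at most $O(D^2)$ cells, then the desired inequality $\sum n_i \lesssim D^2 n$ follows immediately by swapping the order of summation, since $\sum_i n_i = \sum_{S \in \sphrs} |\{\text{cells meeting } S\}|$.

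Fix $S \in \sphrs$ with defining polynomial $f_S$. If $P$ vanishes identically on $S$, then $S \subset Z$ and $S$ meets no open cell; this case contributes $0$ to the sum and is harmless. Otherwise, I would argue that the number of cells meeting $S$ is bounded by the number of semi-algebraically connected components of $S \setminus Z$. Indeed, if a cell $U$ meets $S$, then $S \cap U$ is a nonempty relatively open subset of $S \setminus Z$, so it contains at least one connected component of $S \setminus Z$; since distinct cells are disjoint, no such component can be contained in two different cells, giving the claimed injection.

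Next, I would apply a Milnor--Thom-type bound to $S \setminus \BZ(P)$, viewed as a semi-algebraic set carved out of the 2-dimensional bounded-degree variety $S$ by a single polynomial of degree $D$. This yields $O(D^2)$ connected components. One can also see the bound concretely: by B\'ezout, the real curve $S \cap \BZ(P)$ has degree $O(D)$, hence $O(D^2)$ connected components by a Harnack-style estimate (or Milnor--Thom applied to a curve), and removing these components from the $O(1)$-component surface $S$ can produce at most $O(D^2)$ pieces. Summing $O(D^2)$ over the $n$ surfaces in $\sphrs$ gives $\sum n_i \lesssim D^2 n$.

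I expect the main obstacle to be pinning down the $O(D^2)$ bound in the right generality, since the naive application of Milnor--Thom to an arbitrary semi-algebraic subset of $\RR^3$ only yields $O(D^3)$. The improvement to $O(D^2)$ uses the $2$-dimensionality of $S$ essentially, together with the fact that $\deg f_S$ is bounded by a constant depending only on $k$, which caps the effective dimension and complexity in the Milnor--Thom estimate.
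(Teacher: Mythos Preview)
Your proposal is correct and reaches the same bound, but the route differs from the paper's. The paper argues by \emph{critical-point counting}: after restricting to a large ball and applying a generic translation to $S$, it fixes a generic direction $v$ and observes that each cell $\Omega$ meeting $S$ contributes a point on $\partial\Omega\cap S\subset Z\cap S$ where the space curve $Z\cap S$ is tangent to the plane orthogonal to $v$; such points are isolated zeros of $T(x)=v\wedge\nabla f_S(x)\wedge\nabla P(x)$ on $Z\cap S$, and the real B\'ezout theorem bounds their number by $\deg P\cdot\deg T\cdot\deg f_S=O(D^2)$. Your argument instead notes that $S\cap U$ is clopen in $S\setminus Z$ for each cell $U$, so the number of cells meeting $S$ is at most the number of connected components of $S\setminus Z$, which you then bound by $O(D^2)$ via a Milnor--Thom/Barone--Basu estimate exploiting that $S$ is $2$-dimensional of bounded degree. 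Your approach is cleaner and is in fact the one adopted in later treatments (e.g.\ Kaplan--Matou\v{s}ek--Sharir, Solymosi--Tao); the paper's approach trades the dependence on that black box for a more hands-on B\'ezout computation, at the cost of the genericity maneuvers.

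One small caveat: your ``concrete'' alternative via Harnack is shakier than the main argument. The number of components of the curve $S\cap Z$ does not by itself bound the number of components of $S\setminus Z$ when the curve is singular (think of $D$ concurrent lines in a plane: one connected curve, $2D$ complementary regions). The correct input is the \emph{degree} of the curve, not its component count, and the $O(D^2)$ bound on complementary regions then follows from Milnor--Thom directly---which is exactly your primary argument. So the proposal stands; just drop or repair the Harnack aside.
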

\begin{proof}
Let $S$ be a surface that is not contained in $Z$ and is not entirely contained in the closure of one cell. Since there are finitely many cells, we can select a large closed ball $B\subset\RR^3$ so that the number of cells that meet $S$ is equal to the number of cells that meet $S\cap B.$ We can apply a small generic translation to $S$, and doing so can only increase the number of cells that meet $S\cap B$ (and thus can only increase the number of cells that meet $S$). Select a generic vector $v\in R^3$ and let $T(x) = v \wedge \nabla f_S(x) \wedge \nabla P(x),$ so if $x\in S\cap Z$ and $\nabla f_S(x)$ and $\nabla P(x)$ are non-zero and non-collinear, then $T(x)=0$ if the curve $S\cap Z$ is tangent at $x$ to a plane with normal vector $v$.

For every cell $\Omega$ that meets $S$, there is a point $x \in \partial \Omega\cap S$ satisfying the following properties.
\begin{enumerate}
\item $x$ is a smooth point of the space curve $Z\cap S$.
\item\label{sumOfNiLemmaItm2} $x$ is a non-singular intersection point of $\BZ(T)\cap Z\cap S$.
\item\label{sumOfNiLemmaItm3} $x$ is a smooth point of $\partial\Omega$.
\end{enumerate}
These three properties follow from the fact that $v$ is generic and we picked a generic translation of $S$. From Item \ref{sumOfNiLemmaItm3}, each point $x$ satisfying the above properties can be associated to at most 2 distinct cells $\Omega,\Omega^\prime$.
By Item \ref{sumOfNiLemmaItm2} and the real B\'ezout inequality (see e.g.~\cite[\S 4.7]{Basu}), there can be at most $\deg(P)\deg(T)\deg(f_s) = O(D^2)$ such points, and thus $S$ can enter at most $O(D^2)$ such cells. Since there are $n$ surfaces $S\in\sphrs,$ the result follows.
\end{proof}

Using Lemma \ref{sumOfNiLemma} and the bound from \eqref{canhamThreshholdForSpheres1} we can control the number of incidences between points not lying in $Z$ and surfaces in $\sphrs$:
\begin{equation}\label{controlOfLevelOneCells}
\begin{split}
\incidences(\pts \backslash Z, \sphrs)&= \sum_i \incidences(\pts \cap \Omega_i,\mathcal S)\\
&\lesssim\sum_i m_i n_i^{1-1/k} + n_i\\
&\lesssim \Big(\sum_i m_i^k\Big)^{1/k}\Big(\sum_i n_i\Big)^{1-1/k}+ D^2n\\
&\lesssim \Big(D^3 \frac{m^k}{D^{3k}}\Big)^{1/k}(D^2n)^{1-1/k}+D^2 n\\
&\lesssim \frac{mn^{1-1/k}}{D^{1-1/k}}+D^2 n.
\end{split}
\end{equation}

We must now control $\incidences (\pts \cap Z, \mathcal S)$. We have
\begin{equation}\label{decompositionOfSpheres}
\incidences (\pts \cap Z, \sphrs) = \incidences(\pts \cap Z, \sphrs_1) + \incidences(\pts \cap Z,\sphrs_2),
\end{equation}
where $\sphrs_1$ is the set of surfaces contained in $Z$, and $\sphrs_2$ are the remaining surfaces. Since $Z$ has degree $D$, $Z$ can contain at most $D$ surfaces from $\sphrs$, i.e.~$|\sphrs_1|\leq D$. By \eqref{canhamThreshholdForSpheres2},
\begin{equation}\label{controlOfLevelOneEmbeddedSpheres}
\begin{split}
\incidences(\pts \cap Z,\sphrs_1) &\lesssim  |\sphrs_1|\ |\pts|^{2/3}+|\pts|\\
&\lesssim Dm^{2/3}+m.
\end{split}
\end{equation}

Thus it remains to control $\incidences(\pts \cap Z, \mathcal S_2)$. Write $P=P_1 \ldots P_\ell$, where each $P_j$ is irreducible of degree $D_j$, and let $Z_j=\BZ(P_j)$. Thus we have $D_1+\ldots+D_\ell \leq D$, and $Z=\bigcup Z_j.$ We would like to use Lemma \ref{variantHamSandwichThm} perform a second discrete polynomial ham sandwich decomposition on each variety $Z_j$, but if $(P_j)$ is not a real ideal then we cannot apply the lemma. Luckily, the following lemma lets us remedy this situation.
\begin{lem}\label{allPolysGenerateRealIdeals}
 Let $\mathcal A \subset\RR[x_1,\ldots,x_d]$ be a collection of irreducible polynomials. Then we can find a new collection $\mathcal A^\prime$ of irreducible polynomials such that:
\begin{enumerate}
 \item\label{allPolysGenerateRealIdealsItm1} $\bigcup_{P\in\mathcal A}\BZ(P)\subset\bigcup_{P\in\mathcal A^\prime}\BZ(P)$.
 \item $\sum_{P\in\mathcal A}\deg P \leq \sum_{P\in \mathcal A^\prime}\deg P$.
 \item\label{allPolysGenerateRealIdealsItm3}  $(P)$ is a real ideal for each $P\in\mathcal A^\prime.$
\end{enumerate}
\end{lem}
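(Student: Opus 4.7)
The plan is to handle each $P \in \mathcal A$ individually and then pad with extra low-degree polynomials to enforce the degree inequality $\sum_{P \in \mathcal A} \deg P \leq \sum_{P \in \mathcal A'} \deg P$. Partition $\mathcal A = \mathcal A_{\text{good}} \sqcup \mathcal A_{\text{bad}}$, where $\mathcal A_{\text{good}}$ consists of those $P$ for which $(P)$ is already a real ideal; place each such $P$ directly into $\mathcal A'$ unchanged, contributing both its zero set and its degree.

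For $P \in \mathcal A_{\text{bad}}$, I would use the standard fact that when $P$ is irreducible but $(P)$ is not real, the real zero set $\BZ(P)$ has real dimension at most $d-2$: otherwise $\BZ(P)$ would contain a smooth real point of dimension $d-1$, forcing $\mathbf I(\BZ(P)) = (P)$ via the principal-ideal case of the real Nullstellensatz, so that $(P)$ would be real after all. Decompose $\BZ(P)$ into its finitely many real irreducible components $V_{P,1} \cup \cdots \cup V_{P,r_P}$, and for each $V_{P,i}$ produce an irreducible polynomial $Q_{P,i} \in \RR[x_1,\ldots,x_d]$ with $V_{P,i} \subset \BZ(Q_{P,i})$ and $(Q_{P,i})$ real; add all such $Q_{P,i}$ to $\mathcal A'$. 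Since $\BZ(P) = \bigcup_i V_{P,i} \subset \bigcup_i \BZ(Q_{P,i})$, combined with the good polynomials we obtain item \ref{allPolysGenerateRealIdealsItm1}, while item \ref{allPolysGenerateRealIdealsItm3} holds by construction.

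The main obstacle is producing, for a real irreducible variety $V \subset \RR^d$ of dimension at most $d-2$, such an irreducible $Q$. My plan is a genericity argument in the linear space $\mathcal V_D$ of real polynomials of degree $\leq D$ vanishing on $V$. For $D$ large enough, $\dim \mathcal V_D$ grows faster than the codimensions of the Zariski-closed loci in $\mathcal V_D$ corresponding to (a) reducibility of the polynomial, and (b) having real zero set of dimension strictly less than $d-1$. A generic member of $\mathcal V_D$ then avoids both loci, yielding a $Q$ that is irreducible, satisfies $V \subset \BZ(Q)$, and has $(Q)$ real. The delicate step is verifying that locus (b) is a proper closed subvariety of $\mathcal V_D$—equivalently, that $\mathcal V_D$ contains at least one polynomial whose real zero set is a genuine hypersurface—which one can arrange by exhibiting an explicit example (for instance, the product of a fixed nonzero $f \in \mathbf I(V)$ with a generic linear form $L$ has an irreducible factor whose real zero set already has dimension $d-1$ inherited from $\BZ(L)$).

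Finally, I enforce the degree inequality by padding: append to $\mathcal A'$ irreducible polynomials of the form $x_1 - c_j$ for sufficiently many distinct $c_j \in \RR$ not already appearing. Each generates the real ideal of the hyperplane $\{x_1 = c_j\}$, so each is both irreducible and real-ideal-generating. Adding them only enlarges $\bigcup_{P \in \mathcal A'} \BZ(P)$, so item \ref{allPolysGenerateRealIdealsItm1} is preserved; item \ref{allPolysGenerateRealIdealsItm3} is untouched; and including enough such polynomials provides as much additional degree as needed to achieve $\sum_{P \in \mathcal A'} \deg P \geq \sum_{P \in \mathcal A} \deg P$.
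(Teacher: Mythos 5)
Your reading of item~2 takes the printed inequality $\sum_{\mathcal A}\deg P \leq \sum_{\mathcal A'}\deg P$ at face value and pads $\mathcal A'$ with extra hyperplanes to make the right side large. But that inequality is almost certainly a typo: in the application, $\mathcal A=\{P_1,\ldots,P_\ell\}$ is the set of irreducible factors of a ham-sandwich polynomial $P$ of degree $\leq D$, and the argument that follows crucially uses $\sum_j D_j\leq D$ for the \emph{new} collection, so what is actually needed is $\sum_{\mathcal A'}\deg P \leq \sum_{\mathcal A}\deg P$. The paper's own proof also only establishes that direction. Your entire strategy — a genericity argument in the space $\mathcal V_D$ of degree-$\leq D$ polynomials vanishing on a component $V$, plus padding — is oriented toward \emph{increasing} total degree, which is exactly what the application cannot afford.

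There is also a real gap in the central construction even on its own terms. You need, for each real irreducible $V$ of dimension $\leq d-2$, an \emph{irreducible} $Q$ with $V\subset\BZ(Q)$ and $(Q)$ real, and you argue a generic member of $\mathcal V_D$ works. Two problems: (a) it is not established that the reducible polynomials in $\mathcal V_D$ form a \emph{proper} Zariski-closed subset of the linear space $\mathcal V_D$ — for small $D$ this can fail outright (e.g.\ when $\mathcal V_D$ is spanned by a single fixed reducible generator), and the claim that it becomes proper for $D$ large enough is asserted, not proved; (b) even granting it, the genericity argument gives no degree control whatsoever, and degree control is the substance of item~2 once the inequality is read the right way.

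The paper's proof is considerably simpler and gets the degree bound for free: induct on $\sum_{\mathcal A}\deg P$. If some $(P)$ is not real, then by Proposition~\ref{propertiesOfPrincipleRealIdealProp} the gradient $\nabla P$ vanishes on $\BZ(P)$, so for a generic direction $v$ one has $\BZ(P)\subset\BZ(\nabla_v P)$ with $\deg(\nabla_v P)=\deg P-1$. Replace $P$ by the irreducible factors of $\nabla_v P$ and apply the induction hypothesis; the total degree strictly decreases at each step, the zero set only grows, and eventually every factor generates a real ideal. This sidesteps entirely the need to produce, from scratch, an irreducible real-ideal-generating polynomial containing a prescribed low-dimensional real variety — which is the hard and unresolved part of your sketch.
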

\begin{proof}
We shall proceed by induction on $\sum_{P\in\mathcal A}\deg P$. If the sum is 1 then the result is trivial since in that case $\mathcal A$ consists of a single linear polynomial, so we can let $\mathcal A^\prime =\mathcal A$. Suppose the lemma has been established for all families $\tilde{\mathcal A}$ with $\sum_{P\in\tilde{\mathcal A}}\deg P< w$, and let $\sum_{P\in\mathcal A}\deg P = w$. If $(P)$ is a real ideal for every $P\in\mathcal A$ then the result is immediate. If not, select $P\in\mathcal A$ such that $(P)$ is not a real ideal. By Proposition \ref{propertiesOfPrincipleRealIdealProp} in Appendix \ref{propOfRealVarietiesAppendix}, $\nabla P$ vanishes on $\BZ(P)$. Let $v\in\RR^d$ be a generic unit vector. Then $\BZ(P)\subset \BZ(\nabla_v P)$ and $\deg(\nabla_v P)<\deg P.$ Write $\nabla_v P=Q_1\ldots Q_a$ as a product of irreducible components, and let $\tilde{\mathcal A} = \mathcal A \cup\{Q_1,\ldots,Q_a\}\backslash \{P\}.$ We have $\sum_{P\in\tilde{\mathcal A}}\deg P < \sum_{P\in\mathcal A}\deg P=w$, and $\bigcup_{P\in\mathcal A}\BZ(P)\subset \bigcup_{P\in\tilde{\mathcal A}}\BZ(P)$. Apply the induction hypothesis to $\tilde{\mathcal A}$ to obtain a family $\tilde{\mathcal A}^\prime$ satisfying Properties \ref{allPolysGenerateRealIdealsItm1}--\ref{allPolysGenerateRealIdealsItm3} with $\tilde{\mathcal A}$ in place of $\mathcal A$. We can verify that $\tilde{\mathcal A}^\prime$ has the desired properties.
\end{proof}
After applying Lemma \ref{allPolysGenerateRealIdeals}, we can assume that each irreducible polynomial $P_j$ in the decomposition of $P$ generates a real ideal. Write $\pts \cap Z = \bigsqcup \pts_j$, where $\pts_j$ consists of those points lying in $Z_j$. If a point lies on two or more such varieties, place it into only one of the sets. We need to distinguish between several cases. Let
\begin{align*}
\mathcal A_1 &= \{j\colon |\pts_j|^k < D_j^kn\},\\
\mathcal A_2 &= \{j\colon D_j^kn \leq |\pts_j|^k < c D_j^{3k-1}n\},\\
\mathcal A_3 &= \{j\colon  |\pts_j|^k \geq cD_j^{3k-1}n\},\\
\end{align*}
where $c$ is a small constant to be determined later. For each $j\in\mathcal A_1$ we have
\begin{equation}\label{controlOfIncidencesFromA1}
\begin{split}
\incidences(\pts \cap Z_j,\sphrs_2)&\lesssim |\pts_j|n^{1-1/k}+n\\
&\lesssim D_jn,
\end{split}
\end{equation}
where the second inequality uses the assumption $|\pts_j|< D_jn^{1/k}$. Summing \eqref{controlOfIncidencesFromA1} over all
$j\in\mathcal A_1$, we obtain
\begin{equation}\label{controlOfIncidencesFromA1Summed}
\begin{split}
\incidences(\pts \cap \bigcup_{A_1}Z_j,\sphrs_2)&\lesssim
\sum_{\mathcal A_1} D_jn \\
&\leq Dn.
\end{split}
\end{equation}

Now we must control the incidences between surfaces and points lying on varieties $Z_j,\ j\in\mathcal A_2$ or $\mathcal A_3$. If $j\in\mathcal A_2$, use Theorem \ref{hamSandwichThm} to select a squarefree polynomial $Q_j$ of degree at most $E_j$,
\begin{equation}\label{defnOfE}
E_j=\Big(\frac{|\pts_j|^k}{nD_j^k}\Big)^{1/(2k-1)},
\end{equation}
that cuts $\RR^3$ into $\sim E_j^3$ cells, each of which contains $\lesssim |\pts_j|/ E_j^3$ points of $\mathcal P_j$. Recall that $P_j$ is irreducible, $(P_j)$ is real, and $j\in\mathcal A_2$ implies $\deg(Q_j)\leq E_j <\deg(P_j)$. Thus $Q_j$ does not vanish identically on $Z_j$. Let $\mathcal Q_j = \{Q_j\}$ and let $W_j=\BZ(Q_j)$.

If $j\in\mathcal A_3$, let $E_j$ be as in \eqref{defnOfE} and use Theorem \ref{variantHamSandwichThm} (with $E=E_j$) to find a family $\mathcal Q_j$ of polynomials satisfying properties \ref{variantHamSandwichThmItm1}--\ref{variantHamSandwichThmItm4} of the theorem. In particular, the realizations of the realizable strict sign conditions of $\mathcal Q_j$ on $Z_j$ partition $Z_j$ into ${\sim D_j E_j^2}$ (not necessarily connected) sets, each of which contains $\lesssim |\pts_j|/D_j E_j^2$ points, plus the ``boundary'' $Z_j\cap \bigcup_{\mathcal Q_j}\BZ(Q).$ Define $W_j=\bigcup_{\mathcal Q_j}\BZ(Q)$ (thus the definition of $W_j$ depends on whether $j\in\mathcal A_2$ or $j\in\mathcal A_3$).

Regardless of whether $j\in \mathcal A_2$ or $\mathcal A_3$, have
\begin{equation}\label{secondLevelDecomposition}
 \incidences(\pts_j , \sphrs_2)=\ \incidences(\pts_j \backslash W_j, \sphrs_2) + \incidences(\pts_j\cap W_j,\sphrs_2).
\end{equation}

We shall begin by bounding the first term of \eqref{secondLevelDecomposition}. If $j\in \mathcal A_2$, then through the same computation performed in \eqref{controlOfLevelOneCells} we have
\begin{equation}\label{2ndLevelA2InsideCell}
\begin{split}
\incidences(\pts_j\backslash W_j, \sphrs_2) &\lesssim \frac{|\pts_j|n^{1-1/k}}{E_j^{1-1/k}} + nE_j^2\\
&\leq \frac{|\pts_j|n^{1-1/k}}{E_j^{1-1/k}} + nD_jE_j.
\end{split}
\end{equation}

If $j\in\mathcal A_3$, then let $\Omega_{ij}$ be the realization of the $i$--th realizable strict sign condition of $\mathcal Q_j$ on $Z_j$. Let $m_{ij}=|\pts_j\cap\Omega_{ij}|$, and let $n_{ij}$ be the number of surfaces in $\sphrs_2$ that intersect $\Omega_{ij}$.
\begin{lem}\label{controlOfSphereLevelTwoCellIntersectionsLemma}
\begin{equation} \label{controlOfSphereLevelTwoCellIntersections}
\sum_i n_{ij}\lesssim nD_jE_j.
\end{equation}
\end{lem}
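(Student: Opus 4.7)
The plan is to mimic the proof of Lemma \ref{sumOfNiLemma}: fix a single surface $S\in\sphrs_2$ and show that $S$ meets at most $O(D_j E_j)$ of the cells $\Omega_{ij}$, then sum over the $n$ surfaces in $\sphrs_2$. Since $S\in\sphrs_2$, we have $S\not\subset Z$, and in particular $S\not\subset Z_j$. Because $P_j$ is irreducible with $(P_j)$ real, $Z_j$ has pure real dimension two and $\nabla P_j$ does not vanish identically on $Z_j$. Hence $C:=S\cap Z_j$ is a proper intersection, i.e. a real algebraic curve lying on $Z_j$ of degree $O(D_j)$ (where the implicit constant absorbs the bounded degree of $S$); if needed, this degree bound is obtained by passing to the complexifications $S^*$ and $Z_j^*$ and applying the (complex) B\'ezout theorem, after checking that the complex intersection has no unexpected positive-dimensional components.

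Next I would observe that the cells $\Omega_{ij}$ form a partition of $Z_j\setminus(Z_j\cap W_j)$ into realizations of strict sign conditions of $\mathcal Q_j$. Whenever the curve $C$ passes from one cell $\Omega_{ij}$ into another, it must cross $Z_j\cap W_j=Z_j\cap\bigcup_{Q\in\mathcal Q_j}\BZ(Q)$. Following the argument in Lemma \ref{sumOfNiLemma}, apply a generic small translation to $S$ (which can only increase the number of cells $S$ enters) and pick a generic vector $v\in\RR^3$. Then for every cell $\Omega_{ij}$ that $S$ meets, one can associate a point $x\in C\cap W_j$ with the properties that $x$ is a smooth point of $C$, a smooth point of $\partial\Omega_{ij}$ within $Z_j$, and a non-singular intersection point of $S\cap Z_j\cap \BZ(Q)$ for some $Q\in\mathcal Q_j$. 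By the smoothness at $x$, each such point is incident to at most two cells $\Omega_{ij}$.

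To finish, I would bound the number of these boundary points by
\begin{equation*}
|C\cap W_j|\;\le\;\sum_{Q\in\mathcal Q_j}|S\cap Z_j\cap \BZ(Q)|\;\lesssim\;\sum_{Q\in\mathcal Q_j} D_j\deg Q\;\lesssim\;D_j E_j,
\end{equation*}
using the real B\'ezout theorem (\cite[\S 4.7]{Basu}) to bound each triple intersection by $O(\deg(P_j)\deg(f_S)\deg(Q))=O(D_j\deg Q)$, and then invoking property~\ref{variantHamSandwichThmItm2} of Theorem \ref{variantHamSandwichThm} to sum up the degrees. This gives $O(D_j E_j)$ cells per surface, and summing over $n$ surfaces yields $\sum_i n_{ij}\lesssim n D_j E_j$.

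The main obstacle is the usual one when running a B\'ezout argument over $\RR$: we need each triple intersection $S\cap Z_j\cap\BZ(Q)$ to consist of isolated non-singular points, so that $D_j\deg Q$ is a valid count. The hypothesis that $(P_j)$ is real is precisely what guarantees that $Z_j$ is two-dimensional and that no $Q\in\mathcal Q_j$ vanishes identically on $Z_j$ (property~\ref{variantHamSandwichThmItm3}), so $Z_j\cap\BZ(Q)$ is genuinely a curve; the generic perturbation of $S$ plus the generic choice of $v$ then rule out non-isolated or singular crossings, exactly as in Lemma \ref{sumOfNiLemma}. Everything else in the proof is bookkeeping.
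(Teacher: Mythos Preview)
Your overall strategy matches the paper's, but there is a genuine gap in the boundary-crossing step. You assert that ``for every cell $\Omega_{ij}$ that $S$ meets, one can associate a point $x\in C\cap W_j$,'' i.e.\ a boundary crossing of the curve $C=S\cap Z_j$. This is not true in general: a connected component of $C$ can be a closed loop lying entirely inside a single cell $\Omega_{ij}$, in which case it never touches $W_j$. The situation here is not the same as in Lemma~\ref{sumOfNiLemma}: there the cells were three-dimensional and $S$ was a two-dimensional surface, so $S\cap\overline\Omega$ always had one-dimensional boundary on $\partial\Omega$ on which a $v$-tangent point could be found. Here the cells $\Omega_{ij}$ are two-dimensional pieces of $Z_j$ and $C$ is one-dimensional, so $C\cap\Omega_{ij}$ may have empty boundary. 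You introduce the generic vector $v$ but never actually use it; it is precisely this ingredient that is needed to handle the missing case.

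The paper's proof treats this explicitly as a separate case: either (i) a cell contains an entire connected component of $S'\cap Z_j$, or (ii) the curve meets $W_j$ at a non-singular intersection point. Case~(ii) is your boundary-crossing count, bounded by $O(D_jE_j)$ via B\'ezout as you describe. Case~(i) is bounded by the number of connected components of $S'\cap Z_j$, which the tangent-point argument from Lemma~\ref{sumOfNiLemma} shows is $O(D_j^2)$; crucially, the hypothesis $j\in\mathcal A_3$ forces $E_j\gtrsim D_j$, so $O(D_j^2)=O(D_jE_j)$. You need to add this case to close the argument. A secondary point: to guarantee that all triple intersections $S\cap Z_j\cap\BZ(Q)$ are non-singular (so that real B\'ezout applies), the paper perturbs each $Q$ to $Q+\epsilon$ and replaces $S$ by $\BZ\big((f_S+\epsilon)(f_S-\epsilon)\big)$ rather than translating $S$; you should check that a generic translation of $S$ alone really suffices here, since $Z_j$ is fixed and cannot be moved.
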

\begin{proof}
If a surface $S\in\sphrs_2$ lies in $W_j$ then it does not contribute to the above sum, so we need only consider those surfaces $S$ that do not lie in $Z_j$ or $W_j$. First, we can replace each $Q\in\mathcal Q$ by the polynomial $Q+\epsilon$ for $\epsilon>0$ a sufficiently small constant. If $S\cap \{x\in\RR^3\colon Q(x)>0\}\cap Z_j\neq\emptyset$, then there must be a point on $S\cap Z_j$ where $Q$ is positive, so $S\cap \{x\in\RR^3\colon Q(x)+\epsilon>0\}\cap Z_j\neq\emptyset$ for $\epsilon$ sufficiently small, and similarly for $S\cap \{x\in\RR^3\colon Q(x)<0\}\cap Z_j$. Thus replacing each $Q\in\mathcal Q$ by $Q+\epsilon$ does not increase the number of realizations of realizable strict sign conditions that meet $S$. We shall select a small generic (with respect to $S$ and $Z_j$) choice of $\epsilon$.

By Corollary \ref{surfaceEnteringStrictSignConditions} in Appendix \ref{propOfRealVarietiesAppendix}, we can assume that each irreducible component of each polynomial in $\mathcal{Q}_j$ generates a real ideal.

Instead of counting $\sum_i n_{ij}$ directly, we shall bound the number of times a surface $S$ enters a connected component of $Z_j\backslash W_j$, as this quantity controls $\sum_i n_{ij}$ (i.e.~if the same surface enters multiple connected components of the same realization of a realizable strict sign condition then we will over-count, but this is acceptable). The proof is essentially topological.

Let $S\in\sphrs_2$ with $S$ not contained in $W_j$. As in Lemma \ref{sumOfNiLemma}, we can select a large closed ball $B$ so that the number of connected components of $Z_j\backslash W_j$ that $S$ enters is equal to the number of connected components that $S\cap B$ enters. Now, replace $S$ by $S^\prime=\BZ( (f_S+\epsilon)(f_S-\epsilon))$ for $\epsilon>0$ a sufficiently small generic number. Provided $\epsilon$ is sufficiently small, if $S$ meets a connected component $\Delta$ of $Z\backslash W_j$ then $S^\prime$ also meets $\Delta$, since $f_S$ is a continuous function on the (relatively) open set $\Delta,$ so $f_S$ vanishes somewhere on $\Delta$ but does not vanish identically on $\Delta.$ Thus it suffices to count the number of times $S^\prime$ meets a connected component of $Z_j\backslash W_j$.  After replacing $S$ by $S^\prime$ (and recalling that we applied a small generic perturbation to each $Q\in\mathcal Q$), every point in $Z_j\cap W_j\cap S^\prime$ is a point of non-singular intersection.

Now, if $S$ meets a connected component $\Delta$ of $Z_j\backslash W_j$, then one of the following two things must occur:
\begin{enumerate}
 \item\label{boundingCurveEntryFirstItem} $\Delta$ contains (all of) a connected component of $S^\prime\cap Z_j.$
 \item\label{boundingCurveEntrySecondItem} $S^\prime\cap\Delta$ contains a (topological) curve that meets the boundary of $\Delta$ at a point $x\in S^\prime\cap Z_j\cap W_j$. Furthermore, there is at most one other connected component $\Delta^\prime$ for which Item \ref{boundingCurveEntrySecondItem} holds for the same point $x$.
\end{enumerate}

The argument used in Lemma \ref{sumOfNiLemma} shows that Item \ref{boundingCurveEntryFirstItem} can occur at most $O(D_j^2)=O(D_jE_j)$ times. By the real B\'ezout's inequality, $S^\prime\cap Z_j\cap W_j$ contains $O(D_jE_j)$ points of non-singular intersection, and thus Item \ref{boundingCurveEntrySecondItem} can occur at most $O(D_jE_j)$ times. Thus $S^\prime$ can enter at most $O(D_jE_j)$ connected components of $Z_j\backslash W_j$. Since there are at most $n$ surfaces, the result follows.
\end{proof}
\begin{remark}
A similar result to Lemma
\ref{controlOfSphereLevelTwoCellIntersectionsLemma} can be obtained from the recent work of Barone and Basu in \cite{Barone} and Solymosi and Tao in \cite{Solymosi}. %However, neither of these results are sharp enough for our applications.
\end{remark}

Using Lemma \ref{controlOfSphereLevelTwoCellIntersectionsLemma}, we have
\begin{equation}\label{2ndLevelA3InsideCell}
\begin{split}
\incidences(\pts_j \backslash W_j, \sphrs_2) &= \sum_i \incidences(\pts_j\cap \Omega_{ij},\sphrs_2)\\
&\lesssim \sum_i m_{ij}n_{ij}^{1-1/k} + n_{ij}\\
&\leq \Big(\sum_i m_{ij}^k\Big)^{1/k}\Big(\sum_i n_{ij}\Big)^{1-1/k}+n_{ij}\\
&\lesssim \Big(D_jE_j^2\frac{|\pts_j|^k}{(D_jE_j^2)^k}\Big)^{1/k}(nD_j E_j)^{1-1/k}+nD_jE_j\\
&=\frac{|\pts_j|n^{1-1/k}}{E_j^{1-1/k}} + nD_jE_j.
\end{split}
\end{equation}

Our analysis of the second term of \eqref{secondLevelDecomposition} will be the same regardless of whether $j\in\mathcal A_2$ or $\mathcal A_3$. We shall express this bound as a lemma.
\begin{lem}\label{controlOfSurfaceCurveIncidences}
For $j\in\mathcal A_2\cup\mathcal A_3,$ let $Z_j,\ W_j,\ \pts_j,$  and $\sphrs_2$ be as above. Then
\begin{equation}\label{controlOfSurfaceCurveIncidencesEqn}
 \incidences(\pts_j \cap W_j, \sphrs_2)\lesssim n D_jE_j + |\pts_j|.
\end{equation}

\end{lem}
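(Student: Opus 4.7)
The plan is to exploit the fact that $\pts_j \cap W_j$ lies in the ``curve'' $Y = Z_j \cap W_j$. First, I complexify, using that $P_j$ generates a real ideal (so $Z_j^*$ is the honest complexification of $Z_j$, with complex degree $D_j$) and that no polynomial in $\mathcal{Q}_j$ vanishes on $Z_j$ (ensuring $Z_j^* \not\subset W_j^*$). B\'ezout's theorem in $\CC^3$ then gives that $Y^* = Z_j^* \cap W_j^*$ has dimension at most one and total complex degree $\lesssim D_j E_j$. I decompose $Y^* = \bigcup_\alpha C_\alpha^*$ into complex irreducible components (so $\sum_\alpha \deg C_\alpha^* \lesssim D_j E_j$), and partition $\pts_j \cap W_j = \bigsqcup_\alpha P_\alpha$ by assigning each point to one component containing it.

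Next, for each $S \in \sphrs_2$ and each $\alpha$, I split the contribution $|S \cap P_\alpha|$ into two cases. If $C_\alpha^* \not\subset S^*$, complex B\'ezout gives $|S \cap P_\alpha| \leq |S^* \cap C_\alpha^*| \lesssim \deg C_\alpha^*$, so summing over $S$ and $\alpha$ yields a total \emph{transverse} contribution of $\lesssim n D_j E_j$. The \emph{containment} contribution $\sum_\alpha n_\alpha |P_\alpha|$, where $n_\alpha = |\{S\in\sphrs_2 : C_\alpha^* \subset S^*\}|$, I bound by splitting on $|P_\alpha|$. When $|P_\alpha| \geq k$, the $k$-point condition applied to $k$ distinct points of $P_\alpha$ (each lying on every surface counted by $n_\alpha$) forces $n_\alpha \leq C$, so these contributions sum to $\leq C|\pts_j|$. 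When $|P_\alpha| < k$, I instead bound $\sum_\alpha n_\alpha$ by double-counting: since distinct complex irreducible components have total degree bounded by that of $Y^*$, each surface $S$ contains at most $\deg Y^* \lesssim D_j E_j$ of the $C_\alpha^*$, so $\sum_\alpha n_\alpha \lesssim n D_j E_j$, making this contribution $\leq (k-1) \sum_\alpha n_\alpha \lesssim n D_j E_j$.

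Combining the three contributions will give the desired bound $\incidences(\pts_j\cap W_j,\sphrs_2) \lesssim n D_j E_j + |\pts_j|$. The main technical obstacle I anticipate is the rigorous application of complex B\'ezout: one must verify that $Z_j^*$ is the correct complexification (this is true because $(P_j)$ is a real and hence radical ideal), that $Z_j^* \cap W_j^*$ has the expected codimension (true because the construction of $\mathcal{Q}_j$ in Theorem \ref{variantHamSandwichThm} guarantees that no $Q \in \mathcal{Q}_j$ vanishes identically on $Z_j$), and that the real points on each $C_\alpha^*$ really are controlled by the complex degree $\deg C_\alpha^*$. These subtleties should pose no essential difficulty given the appendix material on real ideals and complexification.
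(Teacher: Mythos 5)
Your proof is correct, and it takes a genuinely different and considerably cleaner route than the paper's. The paper splits incidences according to whether $p^*$ lies on a positive-dimensional component of the triple intersection $S^*\cap Z_j^*\cap W_j^*$; the positive-dimensional case then requires replacing $\mathcal Q_j$ with a new family $\tilde{\mathcal Q}_j$ of irreducible polynomials generating real ideals, distinguishing a subset $\tilde\pts_j$ of points near which $Z_j^*\cap\tilde W_j^*$ is locally a curve, and a resultant/singular-point count to handle $\pts_j\setminus\tilde\pts_j$. You instead decompose $Y^*=Z_j^*\cap W_j^*$ once and for all into irreducible complex components $C_\alpha^*$, assign each point to one component, and split per component according to whether $C_\alpha^*\subset S^*$: the transverse case is a clean per-component B\'ezout bound, and the containment case is handled via the $k$-point hypothesis when $|P_\alpha|\geq k$ and via double-counting $\sum_\alpha n_\alpha\lesssim nD_jE_j$ when $|P_\alpha|<k$. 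This sidesteps the $\tilde{\mathcal Q}_j$ construction, the set $\tilde\pts_j$, and the refined B\'ezout theorem for excess intersections entirely (all your B\'ezout applications are to proper intersections). You also use the $k$-point condition where the paper invokes the $|S\cap S'\cap S''|\leq C$ hypothesis, but both are available. The one point worth spelling out rather than delegating to the appendix is that $P_j$ irreducible over $\RR$ together with $(P_j)$ real forces $P_j$ irreducible over $\CC$ (otherwise $P_j=f\bar f=(\operatorname{Re}f)^2+(\operatorname{Im}f)^2$ up to a constant, contradicting realness); this is what makes $Z_j^*$ irreducible, so that each $Z_j^*\cap\BZ(Q)^*$ is a proper intersection of pure complex dimension one and the degree bound $\sum_\alpha\deg C_\alpha^*\lesssim D_jE_j$ follows.
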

\begin{proof}
We shall write
\begin{equation}
 \incidences(\pts_j\cap W_j, \sphrs_2) = \incidences_1(\pts_j\cap W_j,\sphrs_2)+\incidences_2(\pts_j\cap W_j,\sphrs_2),
\end{equation}
where $\incidences_1$ counts those incidences between points $p\in\pts_j\cap W_j$ and surfaces $S\in\sphrs_2$ such that $p^*$ lies on a 1 (complex) dimensional component of $S^* \cap Z_j^* \cap W_j^*,$ and $\incidences_2$ counts the remaining incidences. To control $\incidences_2$, note that by B\'ezout's inequality (over $\CC$), for each $S\in\sphrs_2,$ $S^*\cap Z_j^*\cap W_j^*$ contains $O(D_jE_j)$ isolated points. Since $|\sphrs_2|\leq n$ we obtain
\begin{equation}\label{controlOfIncidences2}
\incidences_2(\pts_j\cap W_j,\sphrs_2)\lesssim nD_jE_j.
\end{equation}

Thus it remains to control $\incidences_1$. First, we shall replace $\mathcal Q_j$ with a new family of polynomials $\tilde{\mathcal{Q}}_j$ with the following properties:
\begin{enumerate}
 \item\label{WjProperties1} $Z_j\cap W_j\subset Z_j\cap\bigcup_{Q\in\tilde{\mathcal Q}_j}\BZ(Q)$.
 \item\label{WjProperties2} $\sum_{Q\in\tilde{\mathcal Q}_j}\deg Q\leq E_j$.
 \item\label{WjProperties3} Each $Q\in\tilde{\mathcal Q}_j$ is irreducible.
 \item\label{WjProperties4} For each $Q\in\tilde{\mathcal Q}_j$, every irreducible component of $Z_j^*\cap \BZ(Q)^*$ that contains a real point has (complex) dimension 1.
\end{enumerate}
The procedure will be similar to that in the proof of Lemma \ref{allPolysGenerateRealIdeals}: For each $Q\in\mathcal Q_j,$ write $Q=Q_1,\ldots,Q_a$ as a product of  irreducible factors. Discard those factors $Q_b$ with $\BZ(Q_b)\cap Z_j=\emptyset$. Of the remaining factors, place each irreducible factor that generates a real ideal in $\tilde{\mathcal Q}_j$. If $Q_b$ is a factor that does not generate a real ideal then consider $\nabla_v Q_b$ for $v$ a generic vector. By assumption, $Q_b$ does not vanish identically on $Z_j$, but it does vanish at at least one point on $Z_j$. Thus $Q_b$ is not constant on $Z_j$, so $\nabla Q_j$ does not vanish identically on $Z_j$ and hence if $v$ is a generic vector then $\nabla_v Q_b$ does not vanish identically on $Z_j$. Thus we can repeat the above procedure with $\nabla_v Q_b$ in place of $Q$. This process will eventually terminate, and the resulting collection of polynomials $\tilde{\mathcal Q}_j$ has the desired properties; Properties \ref{WjProperties1}--\ref{WjProperties3} are immediate. To obtain Property \ref{WjProperties4}, suppose that $Z_j^*\cap \BZ(Q)^*$ fails to be a complete intersection for some $Q\in\tilde{\mathcal Q}_j$. Then there exists some variety $Y$ that is an irreducible component of both $Z_j^*$ and $\BZ(Q)^*$. by Proposition \ref{factorizationOfComplexVarietyProp} in Appendix \ref{propOfRealVarietiesAppendix}, $\mathcal{R}(Y)$ is an irreducible component of $Z_j$ and $\BZ(Q)$, and thus either $\mathcal{R}(Y)=\emptyset$ or $\mathcal{R}(Y)=Z_j=\BZ(Q)$. The latter is impossible since $Z_j$ and $\BZ(Q)$ have dimension 2, while $Z_j\cap\BZ(Q)$ has dimension at most 1.

Let
\begin{equation*}
\tilde W_j = \bigcup_{Q\in\tilde{\mathcal Q}_j}\BZ(Q).
\end{equation*}
We can write
\begin{equation}\label{decompositionAsIrreducibles}
Z_j^*\cap \tilde W_j^*=\bigcup Y_j
\end{equation}
as a union of irreducible (complex) varieties. By Property \ref{WjProperties4} above, we need only consider those components with (complex) dimension $1$. We shall discard all components that have dimension 2. Let
\begin{align*}
\tilde\pts_j = \{p\in\pts_j\colon\ &\textrm{there exists a (Euclidean) neighborhood $U\subset\CC^3$ of $p^*$ such that}\\
&Z_j^*\cap \tilde W_j^*\cap U\ \textrm{is a (topological) 1--complex-dimensional curve}\}.
\end{align*}

We shall establish several claims.
\begin{enumerate}
\item\label{itm1} $Z_j^*\cap \tilde W_j^*$ is a union of $O(D_jE_j)$ irreducible varieties.
\item\label{itm2} If $p\in\tilde\pts_j$ then $p^*$ lies on at most one of the irreducible component from \eqref{decompositionAsIrreducibles}.
 \item\label{itm3} Let $Y$ be a variety from the above decomposition. If there exist three surfaces $S_1,S_2,S_3\in\sphrs_2$ such that $Y\subset S_i^*,\ i=1,2,3$, then $|\pts_j\cap \mathfrak{R}(Y)|\leq C$.
 \item\label{itm4} If $S\in\sphrs_2$, then there are $O(D_jE_j)$ points $p\notin\tilde\pts_j$ such that $p^*$ is contained in a 1--dimensional component of $S^*\cap Z_j^*\cap W_j^*$.
\end{enumerate}
For Item \ref{itm1} see e.g.~\cite{Fulton}. Item \ref{itm2} follows from the assumption that every variety in the decomposition \eqref{decompositionAsIrreducibles} has dimension $1$. Item \ref{itm3} follows from the requirement that any three surfaces intersect in at most $C$ points. To obtain Item \ref{itm4}, suppose that $D_j\leq E_j$ (if not, we can interchange the roles of $Z_j$ and $W_j$). Note if $p$ satisfies the requirements of Item \ref{itm4}, then $p^*$ is a point of $S^*\cap Z_j^*$ at which $S^*\cap Z_j^*$ fails to be (locally) a 1--dimensional (complex) curve. Thus after a generic rotation of the coordinate axis, the image of $p^*$ under the projection $(x_1,x_2,x_3)\mapsto(x_1,x_2)$ is a singular point of the (complex) plane curve $\BZ(\operatorname{res}_{x_3}(f_S,P_j))^*,$ where $\operatorname{res}_{x_3}$ is the bivariate polynomial obtained by taking the resultant of $f_S$ and $P_j$ in the $x_3$ variable. This curve has degree $O(D_j)$ and thus has $O(D_j^2)=O(D_jE_j)$ singular points.

Now, for each $S\in\sphrs_2$, at most $O(D_jE_j)$ points $p\in\pts_j\backslash\tilde\pts_j$ can contribute to $\incidences_1(\pts_j\cap W_j,\sphrs_2)$, so the total contribution from all surfaces in $\sphrs_2$ is $O(nD_jE_j)$. To control the remaining incidences, use Item \ref{itm3} to write $\{Y_j\}=\{Y_j^\prime\}\sqcup \{Y_j^{\prime\prime}\},$ where the first set consists of varieties that are contained in at most 2 surfaces $S\in\sphrs_2$, and the second consists of varieties that contain at most $C$ points. Each point $p\in\tilde\pts_j$ with $p^*\in\bigcup Y_j^\prime$ can be incident to at most two surfaces, so the total contribution from such points is $O(|\pts_j|)$. On the other hand, by Item \ref{itm1} at most $O(D_jE_j)$ points can be contained in $\mathfrak R(\bigcup Y_j^{\prime\prime})$, so these points can contribute at most $O(nD_jE_j)$ incidences.
\end{proof}
Combining \eqref{2ndLevelA2InsideCell}, \eqref{2ndLevelA3InsideCell}, and  \eqref{controlOfSurfaceCurveIncidencesEqn} and optimizing in $E_j$, we see that our choice of $E_j$ from \eqref{defnOfE} yields the bound

\begin{equation}\label{controlOfLevelTwoCells}
 \incidences(\pts_j , \mathcal S_2)\lesssim |\pts_j|^{\frac{k}{2k-1}}n^{\frac{2k-2}{2k-1}}D_j^{\frac{k-1}{2k-1}}+m_j.
\end{equation}
Summing \eqref{controlOfLevelTwoCells} over all $j\in\mathcal A_2\cup\mathcal A_3$ and noting that $\frac{2k-1}{k}$ and $\frac{2k-1}{k-1}$ are conjugate exponents, we obtain

\begin{equation}\label{controlOfLevelTwoCells2ndEqn}
\begin{split}
\incidences(\pts \cap \bigcup_{\mathcal A_2\cup\mathcal A_3}Z_j, \mathcal S_2) &\lesssim \sum_{\mathcal A_2\cup\mathcal A_3} |\pts_j|^{\frac{k}{2k-1}}n^{\frac{2k-2}{2k-1}}D_j^{\frac{k-1}{2k-1}}+|\pts_j|\\
&\lesssim  n^{\frac{2k-2}{2k-1}}\Big(\sum_j|\pts_j|\Big)^{\frac{k}{2k-1}}\Big(\sum_j D_j\Big)^{\frac{k-1}{2k-1}}+m\\
&\lesssim m^{\frac{k}{2k-1}}n^{\frac{2k-2}{2k-1}}D^{\frac{k-1}{2k-1}}+m.
\end{split}
\end{equation}

Finally, selecting
\begin{equation}\label{valueOfD}
D=m^{\frac{k}{3k-1}}n^{\frac{-1}{3k-1}},
\end{equation}
which by \eqref{mAndMNotTooDifferent} satisfies $D>C$, and combining \eqref{mAndMNotTooDifferent}, \eqref{controlOfLevelOneCells}, \eqref{controlOfLevelOneEmbeddedSpheres}, \eqref{controlOfIncidencesFromA1Summed}, and \eqref{controlOfLevelTwoCells2ndEqn}, we obtain
\begin{equation}\label{sumOfAllTerms}
\begin{split}
\incidences(\pts, \mathcal S)&\lesssim D^2n + m+\frac{mn^{1-1/k}}{D^{1-1/k}}+Dm^{2/3}\\
&\phantom{\lesssim}+nD+m+m^{\frac{k}{2k-1}}n^{\frac{2k-2}{2k-1}}D^{\frac{k-1}{2k-1}}\\
&\lesssim m^{\frac{2k}{3k-1}}n^{\frac{3k-3}{3k-1}}+m^{\frac{2}{3}+\frac{k}{3k-1}}n^{\frac{-1}{3k-1}}+m+n\\
&\lesssim m^{\frac{2k}{3k-1}}n^{\frac{3k-3}{3k-1}}+m+n.
\end{split}
\end{equation}
\end{proof}
\section{Applications}
In \cite{Erdos1,Erdos2}, Erd\H{o}s asked how many unit distances there could be amongst $m$ points in the plane or in $\RR^3$. Theorem \ref{mainThm} yields new bounds for the $\RR^3$ version of this question. Let $\mathcal P$ be a collection of $m$ points in $\RR^3$, and let $\mathcal S$ be a collection of unit spheres centered about the points in $\pts$. We can immediately verify that any three spheres have at most $O(1)$ points in common, so Theorem \ref{mainThm} tells us that there are $O(m^{3/2})$ point-sphere incidences, i.e.
\begin{thm}
The maximum number of unit-distance pairs in a set of $m$ points in $\RR^3$ is $O(m^{3/2}).$
\end{thm}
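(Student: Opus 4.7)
The plan is to reduce the unit-distance question to a point-sphere incidence problem and apply Theorem \ref{mainThm} with $k=3$. For each $p\in\pts$ let $S_p$ denote the unit sphere of $\RR^3$ centered at $p$, and set $\sphrs=\{S_p:p\in\pts\}$, so $|\sphrs|=m$. For $p\ne q$ the condition $|p-q|=1$ is equivalent to $q\in S_p$ (equivalently $p\in S_q$), so the number of unit-distance pairs in $\pts$ is exactly $\tfrac12\,\incidences(\pts,\sphrs)$.

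Next I verify the two nondegeneracy hypotheses of Theorem \ref{mainThm} for this configuration with $k=3$ and a suitable constant $C$. For the intersection hypothesis, any common point of three unit spheres with centers $c_1,c_2,c_3$ must lie on each perpendicular-bisector plane of the pairs $(c_i,c_j)$. When the centers are non-collinear these three planes meet in a single line, and a line meets a sphere in at most two points; when the centers are collinear and distinct the bisectors are distinct parallel planes and the triple intersection is empty. Either way, three distinct unit spheres share at most two common points. For the dual hypothesis, the center of any unit sphere through three points $p_1,p_2,p_3$ must be equidistant from them and at distance exactly one; equidistance confines the center to the line perpendicular to the plane containing $p_1,p_2,p_3$ through the circumcenter of the triangle (or to the empty set, if the three points are collinear), and unit distance then isolates at most two such centers.

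Since the spheres are smooth algebraic surfaces of degree two and both hypotheses are verified with $k=3$, Theorem \ref{mainThm} applied in the form \eqref{mySphereBound} with $n=m$ gives
\[
\incidences(\pts,\sphrs)\ \lesssim\ (m\cdot m)^{3/4}+m\ =\ m^{3/2},
\]
so the number of unit-distance pairs is $O(m^{3/2})$. The one configuration that might in principle have caused trouble is three unit spheres sharing a common circle, but the intersection argument above shows this never happens for distinct unit spheres, so no special handling of degenerate cases is required; this is really the only substantive point to check.
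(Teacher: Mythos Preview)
Your argument is correct and is exactly the approach the paper takes: place a unit sphere at each point, observe that unit-distance pairs are counted (twice) by point--sphere incidences, and apply Theorem~\ref{mainThm} (specifically \eqref{mySphereBound}) with $n=m$. The paper's own justification is a single line (``any three spheres have at most $O(1)$ points in common''), whereas you have spelled out both nondegeneracy hypotheses explicitly; your perpendicular-bisector arguments for the $3$-sphere and $3$-point conditions are fine, and your closing remark that three distinct unit spheres can never share a circle is precisely what guarantees \eqref{mySphereBound} applies.
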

This is a slight improvement over the previous bound of $
O(m^{3/2}\beta(m))$ from \cite{Clarkson}, where $\beta$ is a very slowly growing function.

As observed in \cite{Clarkson}, theorem \ref{mainThm}, combined with the method outlined in \cite{Chung} can be used to establish bounds on the number of incidences between points and spheres in $\RR^d$. Specifically, we have the following theorem:
\begin{thm}
The maximum number of incidences between $m$ points and $n$ spheres in $\RR^d$ is \begin{equation}
O(m^{d/(d+1)}n^{d/(d+1)}+m+n),
\end{equation}
provided no $d$ of the spheres intersect in a common circle.
\end{thm}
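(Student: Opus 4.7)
The plan is to prove the theorem by induction on $d$, with the base case $d=3$ being precisely \eqref{mySphereBound} of Theorem~\ref{mainThm}, since in that setting the hypothesis specializes to ``no three spheres meet in a common circle.''

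For the inductive step, assume the bound has been established in $\RR^{d-1}$. Following the approach outlined in \cite{Chung}, the idea is to reduce to the $(d-1)$-dimensional problem by slicing. Pick a generic direction $u\in S^{d-1}$ and foliate $\RR^d$ by parallel hyperplanes $H_t=\{x:\langle x,u\rangle=t\}$. For each sphere $S\in\sphrs$, the intersection $S\cap H_t$ is a $(d-2)$-sphere in $H_t\cong\RR^{d-1}$ whenever $H_t$ meets the interior of the ball bounded by $S$. The non-degeneracy condition descends under slicing: if no $d$ of the original spheres share a common circle, then after a generic choice of $u$ no $d-1$ of the sliced $(d-2)$-spheres in any $H_t$ share a circle, since such a shared circle $C\subset H_t$ would, combined with one further original sphere whose slice contains $C$, give $d$ original spheres sharing $C$.

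The heart of the argument is to assemble the slices into a single estimate. Summing the inductive bound naively over the slices $H_t$ containing a point of $\pts$ is unhelpful, because a generic slice contains at most one point. Instead, the plan is to apply a random-sampling / cutting argument in the parameter space of spheres, producing a partition of $\sphrs$ into roughly $r$ groups together with a corresponding partition of $\pts$ into compatible subproblems, where $r$ is a parameter to be optimized at the end. Feeding each subproblem through the inductive hypothesis, aggregating the resulting estimates by H\"older's inequality, and choosing $r$ to balance the two terms should then yield the desired $O(m^{d/(d+1)}n^{d/(d+1)}+m+n)$ bound.

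The main obstacle I expect is correctly transferring the non-degeneracy hypothesis through the reduction, as well as controlling spheres that are nearly tangent to (or contained in) a slicing hyperplane. As with Theorem~\ref{mainThm}, these degeneracies should be handled by the generic choice of the foliation direction and by absorbing exceptional contributions into the trivial $m+n$ term. A secondary obstacle is to verify that the arithmetic of the optimization indeed upgrades the exponent $(d-1)/d$ to $d/(d+1)$; this is an elementary but nontrivial check, and it is the reason that the inductive step has any content at all.
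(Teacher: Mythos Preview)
Your base case is right, and the paper does reduce to Theorem~\ref{mainThm} via the inductive method of \cite{Chung}, but Chung's reduction is not slicing by parallel hyperplanes. Instead one restricts to each sphere $S_i\in\sphrs$: stereographically identify $S_i$ with $\RR^{d-1}$, so that every other $S_j$ traces a $(d-2)$-sphere on $S_i$, and apply the inductive bound to the $m_i:=|\pts\cap S_i|$ points and these $n-1$ traces. Summing the resulting incidence counts over $i$ yields $\sum_p\deg(p)(\deg(p)-1)$, where $\deg(p)$ is the number of spheres through $p$; bounding this via the inductive hypothesis and H\"older, and then invoking Cauchy--Schwarz in the form $I^2\le m\sum_p\deg(p)^2$, gives $I^{(d+1)/d}\lesssim mn$ in the main range. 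Crucially, the non-degeneracy descends for free here: if $d-1$ traces on $S_i$ share a circle $C$, then $S_i$ together with the $d-1$ spheres producing those traces are $d$ original spheres containing $C$.

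Your hyperplane-slicing approach does not inherit non-degeneracy in this way. If $d-1$ sliced spheres in $H_t$ share a circle $C\subset H_t$, you only conclude that $d-1$ original spheres contain $C$; the ``one further original sphere whose slice contains $C$'' that you invoke need not exist, so that step is simply wrong. In fact the desired conclusion can fail outright: the hypothesis does not forbid some $d-1$ of the original spheres from meeting in a common $(d-2)$-sphere $\Sigma$, and then even for generic $u$ the corresponding slices all contain the $(d-3)$-sphere $\Sigma\cap H_t$, hence share a circle once $d\ge4$. Independently of this, the ``random-sampling/cutting'' patch for the one-point-per-slice problem is not yet an argument --- you have not specified what is being cut or why the exponents should balance to $d/(d+1)$ --- so the inductive step as written has no content beyond the correct identification of the base case.
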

Again, this is a slight improvement (by a $\beta(m,n)$ factor) from the analogous bounds established
in \cite{Clarkson}. See \cite[\S 6.5]{Clarkson} for additional applications of Theorem \ref{mainThm}. In each case, we are able to slightly sharpen the bound from \cite{Clarkson} by removing the $\beta(m)$ factor.

\section{Generalizations to higher dimension}
It is reasonable to ask whether Theorem \ref{mainThm} can be generalized to incidences between points and hypersurfaces in higher dimensions. This task appears to be quite involved, as the necessary algebraic geometry becomes more difficult. In particular, it appears that in order to generalize the proof of Theorem \ref{mainThm} to (say) spheres in $\RR^d$, we need to perform $d-1$ polynomial ham sandwich decompositions, with each successive decomposition performed on the variety defined by the previous decompositions. As $d$ increases, the number of cases to be considered increases dramatically, and certain difficulties such as the failure of the connected components of a complete intersection to themselves be a complete intersection, the failure of an arbitrary complete intersection to be a nonsingular complete intersection, etc.~become increasingly problematic.

One could also consider dimension 2 surfaces in $\RR^d$, $d>3$, and this appears to be more promising. However, the analogues of \eqref{controlOfLevelOneEmbeddedSpheres} and Lemma \ref{controlOfSurfaceCurveIncidences} become more difficult: an algebraic variety of dimension $d-1$ can contain many 2--dimensional surfaces without obvious constraints being imposed on its structure, and in higher dimensions there are more (and more complicated) ways in which varieties can fail to intersect completely. Nevertheless, this is certainly a promising area for future work.
\appendix
\section{Definitions}\label{algebraicGeometryDefinitions}
\begin{defn}Let $\mathcal Q\subset\RR[x_1,\ldots,x_d]$ be a collection of non-zero real polynomials. A \emph{strict sign condition} on $\mathcal Q$ is a map $\sigma\colon Q\to\{\pm 1\}$. If $Q\in\mathcal Q$, we will denote the evaluation of $\sigma$ at $Q$ either by $\sigma_Q$ or $\sigma(Q)$, depending on context. If $\sigma$ is a strict sign condition on $\mathcal Q$ we define its \emph{realization} by
\begin{equation}\label{aSignCondition}
\reali(\sigma,\mathcal Q)=\{x\in\RR^d\colon Q(x)\sigma_Q>0\ \textrm{for all}\ Q\in\mathcal Q\}.
\end{equation}
If $\reali(\sigma,\mathcal Q)\neq\emptyset$ then we say that $\sigma$ is \emph{realizable}. We define
\begin{equation}\label{setOfRealizableSignConditions}
\Sigma_{\mathcal Q}=\{\sigma\colon\reali(\sigma,\mathcal Q)\neq\emptyset\},
\end{equation}
and
\begin{equation}\label{RealizationsOfSignCondition}
\reali(\mathcal Q)=\{\reali(\sigma,\mathcal Q) \colon \sigma\in\Sigma_{\mathcal Q}\}.
\end{equation}
We call $\reali(\mathcal Q)$ the collection of ``realizations of realizable strict sign conditions of $\mathcal Q$.''

If $Z\subset\RR^d$ is a variety, and $\sigma$ is a strict sign condition on $\mathcal Q$, then we can define the \emph{realization of $\sigma$ on Z} by
\begin{equation}\label{aSignConditionOnZ}
\reali(\sigma,\mathcal Q,Z)=\{x\in Z\colon Q(x)\sigma_Q>0\ \textrm{for all}\ Q\in\mathcal Q\},
\end{equation}
and we can define analogous sets
\begin{equation}\label{setOfRealizableSignConditionsOnZ}
\Sigma_{\mathcal Q,Z}=\{\sigma\colon \reali(\sigma,\mathcal Q,Z)\neq\emptyset\},
\end{equation}
and
\begin{equation}\label{RealizationsOfSignConditionOnZ}
\reali(\mathcal Q,Z)=\{\reali(\sigma,\mathcal Q,Z) \colon \sigma\in\Sigma_{\mathcal Q,Z}\}.
\end{equation}
We call $\reali(\mathcal Q,Z)$ the collection of ``realizations of realizable strict sign conditions of $\mathcal Q$ on $Z$.'' Note that if some $Q\in\mathcal Q$ vanishes identically on $Z$ then $\Sigma_{\mathcal Q,Z}=\emptyset$ and thus $\reali(\mathcal Q,Z)=\emptyset$.
\end{defn}
\begin{defn}\label{defnOfRealIdeal}
 An ideal $I\subset\RR[x_1,\ldots,x_d]$ is \emph{real} if for every sequence $a_1,\ldots,a_\ell\in\RR[x_1,\ldots,x_d]$, $a_1^2+\ldots+a_\ell^2\in I$ implies $a_j\in I$ for each $j=1,\ldots,\ell$.
\end{defn}

\section{Properties of real varieties}\label{propOfRealVarietiesAppendix}
The following proposition shows that real principal prime ideals and their corresponding real varieties have some of the ``nice'' properties of ideals and varieties defined over $\CC$.
\begin{prop}[see {\cite[\S 4.5]{Bochnak}} ]\label{propertiesOfPrincipleRealIdealProp}
 Let $(P)\subset\RR[x_1,\ldots,x_d]$ be a principal prime ideal. Then the following are equivalent:
 \begin{enumerate}
 \item $(P)$ is real.
 \item\label{nullz} $(P) = \mathbf I (\BZ(P))$.
 \item\label{smallDim} $\dim (\BZ (P))=d-1$.
 \item $\nabla P$ does not vanish identically on $\BZ(P)$.
 \item\label{noSignChange} The sign of $P$ changes somewhere on $\RR^d$.
 \end{enumerate}
 \end{prop}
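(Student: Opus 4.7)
The plan is to establish the cycle of implications (1)$\Rightarrow$(2)$\Rightarrow$(3)$\Rightarrow$(4)$\Rightarrow$(5)$\Rightarrow$(1), with the key algebraic input being the Real Nullstellensatz and the remaining implications differential-geometric. For (1)$\Rightarrow$(2) I would invoke the Real Nullstellensatz of Dubois--Risler (see \cite[Ch.~4]{Bochnak}): $\mathbf I(\BZ(I))$ equals the real radical of any ideal $I$, and a real prime ideal is already real-radical, so $\mathbf I(\BZ(P)) = (P)$. The converse (2)$\Rightarrow$(1) is immediate by unwinding definitions: if $\sum a_j^2\in\mathbf I(\BZ(P))$ then each real $a_j$ vanishes on $\BZ(P)$ and hence lies in $(P)$.

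For the geometric chain: (2)$\Rightarrow$(3) is pure commutative algebra---since $(P)$ is principal prime in the regular UFD $\RR[x_1,\ldots,x_d]$, the quotient has Krull dimension $d-1$, which under (2) is the coordinate ring of $\BZ(P)$ and whose Krull dimension agrees with the real algebraic dimension. For (3)$\Rightarrow$(4) I would first upgrade $P$ to $\CC$-irreducibility: a factorization $P = P_1\bar P_1$ with $P_1\notin\RR[x_1,\ldots,x_d]$ would force $\BZ(P)\cap\RR^d \subset \BZ(P_1)\cap\BZ(\bar P_1)$, a complex variety of complex dimension $\leq d-2$, contradicting (3). With $P$ irreducible over $\CC$, the singular locus of $\BZ(P)^*$ has complex dimension $\leq d-2$, so the real smooth locus is a non-empty real-analytic $(d-1)$-manifold on which $\nabla P$ cannot vanish. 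For (4)$\Rightarrow$(5), the implicit function theorem at a point $x_0$ with $\nabla P(x_0)\neq 0$ makes $P$ a local submersion, so nearby points take values of both signs.

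The subtlest step is (5)$\Rightarrow$(1). Given $P(a)>0>P(b)$, restricting $P$ to a generically perturbed segment from $a$ to $b$ yields a one-variable polynomial with a simple zero, hence a point $x_0\in\BZ(P)$ with $\nabla P(x_0)\neq 0$; as in the previous paragraph, $P$ is $\CC$-irreducible and $\BZ(P)$ is a real-analytic $(d-1)$-manifold near $x_0$. If $\sum a_j^2 = Pq\in(P)$, then each real $a_j$ vanishes on this local patch, which is a totally real slice of a smooth complex chart of $\BZ(P)^*$; hence $a_j^*$ vanishes on the complex chart, and complex irreducibility extends this vanishing to all of $\BZ(P)^*$. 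The complex Hilbert Nullstellensatz yields $a_j^*\in(P)_{\CC[x_1,\ldots,x_d]}$, and uniqueness of the quotient in division by $P$ forces $a_j\in(P)$ in $\RR[x_1,\ldots,x_d]$. The main obstacle is this (5)$\Rightarrow$(1) step, combining the generic-perturbation argument to produce a non-singular sign change, the $\RR$-to-$\CC$ irreducibility upgrade, and the holomorphic continuation from a real patch to the complexified variety; the Real Nullstellensatz step (1)$\Rightarrow$(2) is also conceptually heavy but I would dispatch it by citation.
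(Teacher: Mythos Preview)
The paper supplies no proof of this proposition: it is stated with a citation to \cite[\S 4.5]{Bochnak} and used as a black box throughout. Your proposal therefore goes beyond what the paper itself does, giving a self-contained argument where the author simply defers to the literature. The cycle you outline is sound. For the step (5)$\Rightarrow$(1), the assertion that a generic line through the sign-changing region meets $\BZ(P)$ at a simple zero is justified because $P$, being $\RR$-irreducible, is squarefree over $\CC$, so the discriminant (as a polynomial in the line parameters) of $P$ restricted to a variable line is a nonzero real polynomial and hence nonvanishing on generic real lines; this yields a real point $x_0\in\BZ(P)$ with $\nabla P(x_0)\neq 0$, after which your holomorphic-continuation and complex-Nullstellensatz argument goes through. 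Note that your (5)$\Rightarrow$(1) step effectively re-derives (4) and (3) along the way before invoking $\CC$-irreducibility, so the logical dependencies in the cycle are slightly redundant, but this is harmless.
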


\begin{defn}
If $P\subset\RR[x_1,\ldots,x_d]$ is a polynomial and $P=P_1,\ldots, P_\ell$ is its factorization, we define $\hat P$ to be the polynomial obtained by removing those irreducible components that generate ideals that aren't real. If every irreducible component of $P$ generates an ideal that is not real, then we define $\hat P = 1$.
\end{defn}
\begin{example}
Let $P=(x_1^2+x_2^2+x_3^2-1)(x_1^2+x_2^2).$ Then $\hat P=x_1^2+x_2^2+x_3^2-1$. Geometrically, if $\hat P\neq0$, then $\BZ(P)$ is a $(d-1)$--dimensional (real) variety, but some of the components of $\BZ(P)$ may have dimension less than $d-1$. $\hat P$ keeps those factors that generate components that have dimension $d-1$.
\end{example}

The existence of polynomials that do not generate real ideals complicates our analysis, but since the zero sets of such polynomials have codimension at least 2, we can ignore them when we are computing the number of times a surface meets the realization of a realizable strict sign condition of a family of polynomials. The following theorem helps make this statement precise.
\begin{thm}\label{ignoreNonRealComponents}
Let $\mathcal Q\subset\RR[x_1,\ldots,x_d],\ d\geq 3$ be a collection of real polynomials and let $\hat{\mathcal Q}=\{\hat Q\colon Q\in\mathcal Q\}\backslash\{0\}$. Then there exists a bijection
\begin{equation*}
\tau\colon \reali(\mathcal Q)\to \reali(\hat{\mathcal Q})
\end{equation*}
such that
\begin{equation}\label{ignoreNonRealComponentsEqn1}
X \subset \tau(X)\ \textrm{for every}\ X\in \reali(\mathcal Q).
\end{equation}

Similarly, if $Z=\BZ(P)$ where $P\in\RR[x_1,\ldots,x_d]$ generates a real ideal and no polynomial $Q\in\mathcal Q$ vanishes identically on $Z$, then there exists a bijection
\begin{equation*}
\tau\colon \reali(\mathcal Q,Z)\to \reali(\hat{\mathcal Q},Z)
\end{equation*}
such that \eqref{ignoreNonRealComponentsEqn1} holds with $\reali(\mathcal Q,Z)$ in place of $\reali(\mathcal Q).$
\end{thm}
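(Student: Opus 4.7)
My plan is to construct $\tau$ explicitly by exploiting the factorization $Q = \hat Q \cdot R_Q$, where $R_Q$ collects the irreducible factors of $Q$ that do \emph{not} generate real ideals. By Proposition \ref{propertiesOfPrincipleRealIdealProp}, each such factor has constant sign on all of $\RR^d$ and zero set of dimension $\leq d-2$; hence $R_Q$ itself has a fixed sign $\epsilon_Q \in \{\pm 1\}$ off of the algebraic set $\BZ(R_Q)$, which has dimension $\leq d - 2$. Given any realizable strict sign condition $\sigma$ on $\mathcal Q$ with $X = \reali(\sigma, \mathcal Q)$, I will define $\hat\sigma \colon \hat{\mathcal Q} \to \{\pm 1\}$ by $\hat\sigma(\hat Q) := \epsilon_Q \sigma(Q)$ and set $\tau(X) := \reali(\hat\sigma, \hat{\mathcal Q})$.

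The first step is to check that $\hat\sigma$ is well defined (independent of the preimage $Q$ chosen) and that $X \subset \tau(X)$. Both follow from the identity $\mathrm{sign}(Q(x)) = \epsilon_Q \cdot \mathrm{sign}(\hat Q(x))$ holding at every $x \in X$: since $Q(x)\neq 0$ forces $R_Q(x)\neq 0$, the factor $R_Q(x)$ contributes exactly the sign $\epsilon_Q$. When two $Q_1, Q_2 \in \mathcal Q$ share the same $\hat Q$, this identity pins down $\mathrm{sign}(\hat Q(x)) = \epsilon_{Q_1}\sigma(Q_1) = \epsilon_{Q_2}\sigma(Q_2)$, giving a consistent value.

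Injectivity of $\tau$ follows because a non-empty realization uniquely determines its sign condition, so $\tau(X_1) = \tau(X_2)$ forces $\hat\sigma_1 = \hat\sigma_2$, and hence $\sigma_1(Q) = \epsilon_Q\hat\sigma_1(\hat Q) = \sigma_2(Q)$ for every $Q$ with $\hat Q \in \hat{\mathcal Q}$. For $Q$ with $\hat Q$ trivial (so $Q$ itself has constant sign), nonemptiness of $X_i$ forces $\sigma_i(Q) = \epsilon_Q$, recovering $\sigma_1 = \sigma_2$ fully and therefore $X_1 = X_2$.

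Surjectivity is where the dimensional hypothesis enters and is where I expect the main obstacle. Given $Y = \reali(\hat\sigma, \hat{\mathcal Q})$, I would define $\sigma(Q) := \epsilon_Q \hat\sigma(\hat Q)$ (with $\hat\sigma(\hat Q):=+1$ if $\hat Q\equiv 1$) and locate a witness $y \in Y \setminus \bigcup_Q \BZ(R_Q)$; a direct sign computation as above then yields $y \in \reali(\sigma,\mathcal Q)$ and $\tau(\reali(\sigma, \mathcal Q)) = Y$. The existence of $y$ is the delicate point: since $\bigcup_Q \BZ(R_Q)$ is a finite union of algebraic sets of dimension $\leq d - 2$ while $Y$ is open and nonempty in $\RR^d$ and hence has full dimension $d \geq 3$, the difference is nonempty. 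For the second bijection, the same argument goes through once one observes that, by the hypothesis that no $Q \in \mathcal Q$ vanishes identically on $Z$, no factor $R_Q$ of $Q$ can vanish identically on $Z$ either; consequently $\BZ(R_Q) \cap Z$ is a proper algebraic subset of the $(d-1)$-dimensional variety $Z$, of dimension $\leq d - 2 < d - 1$, so it cannot cover the relatively open, nonempty set $Y \cap Z$.
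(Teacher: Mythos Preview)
Your proof is correct and follows essentially the same route as the paper's: both arguments factor $Q=\hat Q\cdot R_Q$, fix the constant sign $\epsilon_Q$ of $R_Q$ via Proposition~\ref{propertiesOfPrincipleRealIdealProp}, define $\hat\sigma(\hat Q)=\epsilon_Q\sigma(Q)$, and establish bijectivity by a dimension count (the ``bad'' set $\bigcup_Q\BZ(R_Q)$ has dimension at most $d-2$, whereas realizations have dimension $d$, respectively $d-1$ on $Z$). Your treatment is slightly more explicit in separating out the degenerate case $\hat Q\equiv 1$, but the substance is identical.
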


\begin{proof}
First, by Item \ref{noSignChange} of Proposition \ref{propertiesOfPrincipleRealIdealProp}, for each $Q\in\mathcal Q$ we have that $Q/\hat Q\geq 0$ or $Q/\hat Q\leq 0$ on all of $\RR^d$. Choose $\varepsilon_Q\in\{\pm1\}$ so that $\varepsilon_Q Q/\hat Q\geq 0$. Now, note that if there exist $Q_1,Q_2\in\mathcal Q$ with $\hat Q_1=\hat Q_2$ and if $\sigma$ is a strict sign condition on $\mathcal Q$, then either $\varepsilon_{Q_1} \sigma(Q_1)=\varepsilon_{Q_2}\sigma(Q_2)$ or $\reali(\sigma,\mathcal Q)=\emptyset$. Thus if $\sigma$ is a realizable strict sign condition on $\mathcal Q$, then we can define $\hat\sigma\colon\hat{\mathcal Q}\to\{\pm1\}$ by $\hat\sigma(T)=\varepsilon_Q\sigma(Q)$, where $Q\in \mathcal Q$ satisfies $T=\hat Q$, and $\hat\sigma$ is well-defined.

We shall show that the map $\Sigma_{\mathcal Q}\to\Sigma_{\hat{\mathcal Q}},\ \sigma\mapsto\hat\sigma$ is a bijection. To prove injectivity, note that if distinct $\sigma_1,\sigma_2$ both map to the same element $\hat\sigma$, then $\varepsilon_Q\sigma_1(Q)=\varepsilon_Q\sigma_2(Q)$ for all $Q\in\mathcal Q$, so clearly $\sigma_1=\sigma_2$. To establish surjectivity, note that each $\sigma_1\in\Sigma_{\tilde Q}$ has a pre-image under the map $\sigma\mapsto\hat\sigma$. Thus every element of $\Sigma_{\tilde Q}$ may be written as $\hat\sigma$ for some strict sign condition $\sigma$ on $\mathcal Q$. All that we must establish is that $\sigma$ is realizable. For each $Q\in\mathcal Q,$ we have
\begin{equation}\label{signConditionMinusTildeCodim2}
\dim\big(\{\hat Q>0\}\backslash \{\varepsilon_Q Q >0\}\big)\leq d-2,
\end{equation}
(see i.e.~\cite{Bochnak} for the dimension of a semi-algebraic set). On the other hand, the realization of each realizable strict sign condition of $\hat{\mathcal Q}$ has dimension $d$. Thus if $\reali(\hat\sigma,\hat{\mathcal Q})\neq\emptyset$ then $\reali(\sigma,\mathcal Q)$ can be written as a (non-empty) dimension $d$ semi-algebraic set minus a dimension $d-2$ semi-algebraic set, and in particular, $\reali(\sigma,\mathcal Q)\neq\emptyset.$

Thus the map $\reali(\mathcal Q)\mapsto\reali(\hat{\mathcal Q})$ which takes $X=\reali(\sigma,\mathcal Q)\mapsto\reali(\hat\sigma,\hat{\mathcal Q})$ is well-defined and is a bijection. Now, note that by Items \ref{smallDim} and \ref{noSignChange} of Proposition \ref{propertiesOfPrincipleRealIdealProp}, $\{\varepsilon_QQ>0\}\subset\{\hat Q>0\}$,
%  and $\dim(\{\tilde Q>0\}\backslash \{Q>0\})\leq d-2$
and similarly with ``$>$'' replaced by ``$<$''). Thus
\begin{equation}\label{XQsigmaSubset}
\reali(\sigma,\mathcal Q)\subset\reali(\hat\sigma,\hat{\mathcal Q}),
\end{equation}
so \eqref{ignoreNonRealComponentsEqn1} holds.

The same arguments establish the second part of the theorem. The only new thing that must be verified is that the map $\Sigma_{\mathcal Q,Z}\to\Sigma_{\hat{\mathcal Q},Z},\ \sigma\mapsto\hat\sigma$ is onto. However, this is established by \eqref{signConditionMinusTildeCodim2} plus the fact that the realization of each realizable strict sign condition of $\mathcal Q$ on $Z$ has dimension $d-1$.
\end{proof}
\begin{cor}\label{surfaceEnteringStrictSignConditions}
Let $S\subset\RR^3$ be a smooth surface, let $\mathcal Q$ be a collection of polynomials, and let $\hat{\mathcal Q}$ be as in Theorem \ref{ignoreNonRealComponents}. Then
\begin{equation}
|\{X\in\reali(\mathcal Q)\colon X\cap S\neq\emptyset\}|\leq |\{X\in\reali(\hat{\mathcal Q})\colon X\cap S\neq\emptyset\}|.
\end{equation}
Similarly, let $S\subset\RR^3$ be a smooth surface, let $Z=\BZ(P)$ where $P\in\RR[x_1,x_2,x_3]$ generates a real ideal, let $\mathcal Q$ be a collection of polynomials, none of which vanish identically on $Z$, and let $\hat{\mathcal Q}$ be as in Theorem \ref{ignoreNonRealComponents}. Then
\begin{equation}
|\{X\in\reali(\mathcal Q,Z)\colon X\cap S\neq\emptyset\}|\leq |\{X\in\reali(\hat{\mathcal Q},Z)\colon X\cap S\neq\emptyset\}|.
\end{equation}
\end{cor}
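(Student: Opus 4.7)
The plan is to deduce the corollary directly from Theorem \ref{ignoreNonRealComponents}. That theorem already provides a bijection $\tau\colon\reali(\mathcal Q)\to \reali(\hat{\mathcal Q})$ equipped with the crucial containment property $X\subset\tau(X)$ for every $X\in\reali(\mathcal Q)$. Given this, the corollary is essentially a short set-theoretic chase.

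First I would observe that if $X\in\reali(\mathcal Q)$ meets $S$, then so does $\tau(X)$, since $\tau(X)\cap S\supset X\cap S\neq\emptyset$. Consequently the restriction of $\tau$ to the subfamily $\{X\in\reali(\mathcal Q)\colon X\cap S\neq\emptyset\}$ takes values inside $\{Y\in\reali(\hat{\mathcal Q})\colon Y\cap S\neq\emptyset\}$. Since $\tau$ is a bijection on all of $\reali(\mathcal Q)$, its restriction is automatically injective, and the desired inequality of cardinalities follows immediately.

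The second statement is handled by the identical argument, using instead the bijection $\reali(\mathcal Q,Z)\to \reali(\hat{\mathcal Q},Z)$ and its analogous containment property provided by the second half of Theorem \ref{ignoreNonRealComponents}. The hypotheses that $(P)$ is real and that no $Q\in\mathcal Q$ vanishes identically on $Z$ are used only insofar as they are needed to invoke that bijection; once the bijection is in hand, the same injectivity-plus-containment argument gives the claim.

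The main point worth flagging is that there is essentially no obstacle beyond citing Theorem \ref{ignoreNonRealComponents}: all the real content (matching up sign conditions on $\mathcal Q$ with those on $\hat{\mathcal Q}$, and showing that dropping non-real factors only enlarges realizations) has been absorbed into that theorem. In particular the smoothness hypothesis on $S$ plays no role in the cardinality bound itself—any subset of $\RR^3$ would work equally well—and is presumably included only because this is the form in which the corollary is subsequently applied in the proofs of Lemma \ref{sumOfNiLemma} and Lemma \ref{controlOfSphereLevelTwoCellIntersectionsLemma}.
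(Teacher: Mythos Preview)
Your argument is correct and is exactly what the paper intends: the corollary is stated without proof precisely because it is an immediate consequence of the bijection $\tau$ with $X\subset\tau(X)$ supplied by Theorem \ref{ignoreNonRealComponents}. Your observation that the smoothness of $S$ plays no role in the argument is also accurate.
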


As noted in Section \ref{obstructions}, the number of intersection points of a collection of real polynomials may exceed the product of their degrees, even if those polynomials intersect completely. Over $\CC$ things are much better behaved, so there will be times when we will wish to embed everything into $\CC$. The following proposition relates the properties of a variety defined over $\RR$ and the corresponding variety defined over $\CC$:

\begin{prop}[see {\cite[\S 10]{Whitney}}]\label{factorizationOfComplexVarietyProp}
 Let $Z\subset\RR^d$ be a real variety and let $Z_1^*,\ldots,Z^*_\ell$ be the irreducible components of $Z^*$. Then $\mathfrak{R}(Z_1^*),\ldots,\mathfrak{R}(Z_\ell^*)$ are the irreducible components of $Z$.
\end{prop}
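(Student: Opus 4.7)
The plan is to exploit the minimality of $Z^*$ as the smallest complex variety containing $Z$, together with the action of complex conjugation $\sigma\colon\CC^d\to\CC^d$. Since $Z\subset\RR^d$ is $\sigma$-invariant, so is $Z^*$, and hence $\sigma$ permutes the complex irreducible components $Z_1^*,\ldots,Z_\ell^*$.

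First I would prove that each $Z_i^*$ is itself $\sigma$-invariant (i.e.~defined over $\RR$). The key is that $Z\cap Z_i^*$ must be Zariski-dense in $Z_i^*$ over $\CC$: otherwise one could replace $Z_i^*$ in the decomposition $Z^*=\bigcup_j Z_j^*$ by the (strictly smaller) complex Zariski closure of $Z\cap Z_i^*$ and still obtain a complex variety containing $Z$, contradicting the minimality of $Z^*$. But $Z\cap Z_i^*\subset Z_i^*\cap\RR^d\subset Z_i^*\cap\sigma(Z_i^*)$, and if $\sigma(Z_i^*)\neq Z_i^*$ then by complex irreducibility $Z_i^*\cap\sigma(Z_i^*)$ is a proper complex subvariety of $Z_i^*$, contradicting density. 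Thus every $Z_i^*$ is $\sigma$-stable, and as a consequence $\mathfrak R(Z_i^*)$ is Zariski-dense in $Z_i^*$.

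From $Z^*=\bigcup Z_i^*$, taking real points immediately gives $Z=\bigcup_i\mathfrak R(Z_i^*)$. To check pairwise incomparability: if $\mathfrak R(Z_i^*)\subset\mathfrak R(Z_j^*)$, passing to complex Zariski closures (using the density just established) yields $Z_i^*\subset Z_j^*$, contradicting the distinctness of the $Z_i^*$ as complex irreducible components. For real irreducibility of each $\mathfrak R(Z_i^*)$, I would suppose $\mathfrak R(Z_i^*)=V_1\cup V_2$ with $V_1,V_2$ proper real subvarieties; taking complex Zariski closures on both sides (again using density) gives $Z_i^*=V_1^*\cup V_2^*$, and complex irreducibility of $Z_i^*$ forces some $V_j^*=Z_i^*$, whence $V_j=V_j^*\cap\RR^d\supset\mathfrak R(Z_i^*)$, contradicting the properness of $V_j$.

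The main obstacle is the density argument in the first step: one must verify carefully that replacing $Z_i^*$ by the complex Zariski closure of $Z\cap Z_i^*$ genuinely produces a proper subvariety, using complex irreducibility of $Z_i^*$, and that the real points of a non-$\sigma$-invariant complex irreducible variety are forced into the proper intersection $Z_i^*\cap\sigma(Z_i^*)$. Once this density/minimality interplay is pinned down, the remaining steps reduce to routine manipulations of Zariski closures and irreducibility.
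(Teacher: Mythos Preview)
The paper does not actually prove this proposition; it simply states it with a citation to Whitney \cite{Whitney}. Your proposal supplies a correct self-contained argument. The key minimality step works exactly as you describe: if $Z\cap Z_i^*$ were not Zariski-dense in $Z_i^*$, replacing $Z_i^*$ by the complex closure of $Z\cap Z_i^*$ would yield a strictly smaller complex variety still containing $Z$ (strictness uses that $Z_i^*\not\subset\bigcup_{j\neq i}Z_j^*$), contradicting the definition of $Z^*$. The remaining deductions---$\sigma$-invariance of each $Z_i^*$, the decomposition $Z=\bigcup_i\mathfrak{R}(Z_i^*)$, pairwise incomparability, and real irreducibility---then follow as you outline. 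The one fact you invoke implicitly is that $\mathfrak{R}(V^*)=V$ for any real variety $V$ (needed both to get $Z=\mathfrak{R}(Z^*)$ and in the irreducibility step); this holds because $V^*\subset\BZ_{\CC}(\mathbf I(V))$, so every real point of $V^*$ satisfies all the real equations cutting out $V$.
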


\section{Proof of Theorem \ref{variantHamSandwichThm}}\label{proofOfPolyHamAppendix}

 \label{variantHamSandwichThmAppendix} Our proof of Theorem \ref{variantHamSandwichThm} will be similar to the original proof of the discrete polynomial ham sandwich theorem in \cite[\S4]{Guth}, which can be stated as follows:
\begin{prop}[Discrete polynomial ham sandwich theorem]\label{discreteHamSandProp}
Let $V\subset\RR[x_1,\ldots,x_d]$ be a vector space of dimension $\ell$, and let $F_1,\ldots,F_\ell\subset\RR^d$ be finite families of points. Then there exists a polynomial $P\in V$ such that
\begin{equation*}
\begin{split}
&|F_j \cap \{x\in\RR^d\colon P(x)>0\}|\leq |F_j|/2,\ \textrm{and}\\
&|F_j\cap \{x\in\RR^d\colon P(x)<0\}|\leq|F_j|/2,\ j=1,\ldots,\ell.
\end{split}
\end{equation*}
\end{prop}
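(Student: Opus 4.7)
The plan is to follow the classical smoothing plus Borsuk--Ulam approach used by Guth--Katz in \cite{Guth}. First, since the statement concerns homogeneous polynomials and projective points, I would either assume $e$ is even (so the signs of $P$ descend to $\RP^d$) or pass to the double cover $S^d \to \RP^d$ and count representatives accordingly. Having set that up, I would reduce to a continuous problem by smoothing: replace each finite set $F_j$ by a probability measure $\mu_j$ that is absolutely continuous with respect to Lebesgue measure and concentrates mass $1/|F_j|$ in a small round ball of radius $\eps$ around each point of $F_j$, with the balls chosen so small that they are pairwise disjoint.

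Next, fix any inner product on $V$ and let $S(V) \cong S^{\ell-1}$ denote the unit sphere. Define the map
\begin{equation*}
\Psi \colon S(V) \to \RR^{\ell}, \qquad \Psi_j(P) = \mu_j(\{P > 0\}) - \mu_j(\{P < 0\}),\ j = 1, \ldots, \ell.
\end{equation*}
Continuity of $\Psi$ follows because each $\mu_j$ is absolutely continuous and the hypersurface $\{P = 0\}$ has Lebesgue measure zero for every non-zero $P$, so small perturbations of $P$ move only a small amount of $\mu_j$-mass across the zero set. Antipodal symmetry $\Psi(-P) = -\Psi(P)$ follows from $\{-P > 0\} = \{P < 0\}$.

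Apply the Borsuk--Ulam theorem to produce $P \in S(V)$ with $\Psi(P) = 0$, i.e., $\mu_j(\{P > 0\}) = \mu_j(\{P < 0\}) \leq \tfrac{1}{2}$ for every $j$. Since the bumps are disjoint, each point of $F_j$ lying strictly in $\{P > 0\}$ contributes its full mass $1/|F_j|$ to $\mu_j(\{P > 0\})$, so
\begin{equation*}
\frac{|F_j \cap \{P > 0\}|}{|F_j|} \leq \mu_j(\{P > 0\}) \leq \tfrac{1}{2},
\end{equation*}
and symmetrically for $\{P < 0\}$. To remove the dependence on the smoothing radius, I would take a sequence $\eps_k \to 0$, extract a convergent subsequence from the compact sphere $S(V)$, and pass to the limit polynomial $P_\infty$, which is automatically non-zero and satisfies the required inequalities.

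The main obstacle will be reconciling the Borsuk--Ulam dimension count: the standard theorem applies to odd maps $S^n \to \RR^n$, whereas above the source is $S^{\ell-1}$ and the target is $\RR^\ell$. The resolution relies on the fact that the proposition's conclusion is \emph{weaker} than exact bisection (points sitting on $\{P = 0\}$ satisfy both inequalities for free), so one can either project $\Psi$ onto an $\RR^{\ell-1}$-valued odd map (discarding one redundant coordinate, e.g.\ via a linear functional adapted to the $F_j$) or argue directly by degree theory that a zero must exist. A secondary technical point is verifying continuity of $\Psi$ when bumps straddle $\{P=0\}$, which is handled cleanly once the measures are absolutely continuous and the bumps are chosen generically with respect to the zero sets of a countable dense family of polynomials in $V$.
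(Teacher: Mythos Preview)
Your approach---smoothing the point sets into absolutely continuous measures, applying Borsuk--Ulam to the odd map on the unit sphere $S(V)$, and passing to a limit---is exactly the Guth--Katz argument the paper invokes in place of a proof, so the method is the intended one.

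The dimension mismatch you flag is genuine, but your proposed fixes do not work. An odd continuous map $S^{\ell-1}\to\RR^\ell$ need not vanish (the inclusion $S^{\ell-1}\hookrightarrow\RR^\ell$ is already a counterexample), so degree theory will not manufacture a zero, and ``discarding a redundant coordinate'' only bisects $\ell-1$ of the families, leaving the last one uncontrolled. In fact the proposition as printed is off by one for nonzero $P$: with $\dim V=\ell$ one can guarantee bisection of only $\ell-1$ families (take $\ell=1$, $V=\operatorname{span}(Q)$, and $F_1\subset\{Q>0\}$ with $|F_1|\geq 2$; then neither $Q$ nor $-Q$ satisfies both inequalities). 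This is harmless in the paper, because the sole application (Lemma~\ref{polyHamLemma}) explicitly takes $\dim V>\ell$, so there is a spare dimension; the clean resolution is to state and prove the proposition with $\ell-1$ families (equivalently, with $\dim V\geq\ell+1$), which your Borsuk--Ulam argument then establishes without any patching.
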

Proposition \ref{discreteHamSandProp} is proved in \cite{Guth} only in the special case where $V$ is the vector space of all polynomials of degree at most $e$ (where $e$ is chosen large enough to ensure that $V$ has the required dimension). However, the proof carries over verbatim to the general case where $V$ is arbitrary. To prove Theorem \ref{variantHamSandwichThm}, we will iterate the following lemma:

\begin{lem}\label{polyHamLemma}
Let $Z=\BZ(P)\subset\RR^d$ for $P$ an irreducible polynomial of degree $D$ such that $(P)$ is a real ideal. Let $E>0$, and let $F_1,\ldots, F_\ell,\ \ell= c \min(E^d, DE^{d-1})$ be finite families of points in $\RR^d$, with $F_j\subset Z$ for each $j$. Then provided $c$ is sufficiently small (depending only on $d$), there exists a polynomial $Q$ of degree at most $E$ that does not vanish identically on $\BZ(P)$  such that
\begin{equation}\label{propertiesOfQ}
\begin{split}
&|F_j \cap \{x\in\RR^d\colon Q(x)>0\}|\leq |F_j|/2,\ \textrm{and}\\
&|F_j\cap \{x\in\RR^d\colon Q(x)<0\}|\leq|F_j|/2,\ j=1,\ldots,\ell.
\end{split}
\end{equation}
\end{lem}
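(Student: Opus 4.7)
The plan is to reduce Lemma \ref{polyHamLemma} to the discrete polynomial ham sandwich theorem (Proposition \ref{discreteHamSandProp}), applied to a carefully chosen subspace of homogeneous polynomials. Let $P^h\in\RR[x_0,\ldots,x_d]_D$ denote the homogenization of $P$, and embed $\RR^d$ into $\RP^d$ via $(x_1,\ldots,x_d)\mapsto[1:x_1:\cdots:x_d]$ so that each family $F_j$ lifts to a finite subset of $\RP^d$. I will take $V\subset\RR[x_0,\ldots,x_d]_E$ to be any vector space complement of the homogeneous subspace $(P^h)_E$ of degree-$E$ multiples of $P^h$, and invoke Proposition \ref{discreteHamSandProp} on $V$ and the lifted families to obtain a non-zero $\tilde Q\in V$ that simultaneously bisects every family.

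The crux of the argument is the dimension bound
\[
\dim V \;=\; \binom{E+d}{d}-\binom{E-D+d}{d}\;\gtrsim_d\; \min(E^d,\,DE^{d-1}),
\]
where the second binomial coefficient is interpreted as $0$ when $E<D$. The case $E<D$ is trivial, and for $E\geq D$ one telescopes $\binom{E+d}{d}-\binom{E-D+d}{d}=\sum_{j=1}^{D}\binom{E-j+d}{d-1}$ and uses that each summand is $\gtrsim_d E^{d-1}$ (handling $E\geq 2D$ and $D\leq E<2D$ separately). Taking the constant $c$ in the hypothesis of the lemma smaller than this implicit dimensional constant guarantees $\dim V>\ell$, so the hypotheses of Proposition \ref{discreteHamSandProp} are met and a non-zero $\tilde Q\in V$ is produced.

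To finish, set $Q(x_1,\ldots,x_d):=\tilde Q(1,x_1,\ldots,x_d)$, which is a polynomial of degree at most $E$. For a homogeneous $\tilde Q$ of degree $E$, the monomials in the dehomogenization are in bijection with those of $\tilde Q$ (the $x_0$-power is determined by total degree), so $Q\neq 0$. The bisection property \eqref{propertiesOfQ} is inherited from the lifted families, since $\operatorname{sgn}\tilde Q(1,x)=\operatorname{sgn}Q(x)$ at each lifted point. To rule out that $Q$ vanishes identically on $Z=\BZ(P)$, suppose for contradiction that it does: since $(P)$ is real, Item \ref{nullz} of Proposition \ref{propertiesOfPrincipleRealIdealProp} yields $\mathbf I(Z)=(P)$, so $Q=PR$ for some $R\in\RR[x_1,\ldots,x_d]$. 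Rehomogenizing, and using $(PR)^h=P^hR^h$ together with the fact that the unique homogeneous-degree-$E$ lift of $Q$ is $x_0^{E-\deg Q}Q^h$, one computes $\tilde Q = x_0^{E-\deg Q}P^h R^h\in(P^h)_E$, contradicting $\tilde Q\in V\setminus\{0\}$.

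The most delicate point is the dimension count, in particular the subcase where $E$ is comparable to $D$; the rest is bookkeeping tied together by the reality of $(P)$, which is precisely what lets one convert between vanishing on $Z$ and membership in $(P)$, and hence in $(P^h)$.
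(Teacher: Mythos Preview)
Your proof is correct and follows essentially the same approach as the paper: homogenize, choose $V$ as a complement to $(P^h)_E$ inside $\RR[x_0,\ldots,x_d]_E$, apply Proposition~\ref{discreteHamSandProp}, dehomogenize, and use reality of $(P)$ to conclude $Q$ does not vanish on $Z$. You supply more detail than the paper does---in particular the explicit dimension count $\binom{E+d}{d}-\binom{E-D+d}{d}\gtrsim_d\min(E^d,DE^{d-1})$ via telescoping, and the careful rehomogenization argument $\tilde Q=x_0^{E-\deg Q}P^hR^h$---but the skeleton is identical.
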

\begin{proof}
Let $\RR[x_1,\ldots,x_d]_{\leq E}$ be the vector space of all polynomials in $d$ variables of degree at most $E$, and let $(P)_{\leq E}$ be the vector space of all polynomials in the ideal $(P)$ that have degree at most $E$ (of course, if $E < \deg P$ then $(P)_{\leq E}=0$). We have
\begin{equation*}
\dim(\RR[x_1,\ldots,x_d]_{\leq E}) -\dim ((P)_{\leq E}) >  c\min(E^d, DE^{d-1})
\end{equation*}
for some (explicit) constant $c$ depending only on the dimension $d$. Thus, we can find a vector space $V\subset \RR[x_1,\ldots,x_d]_{\leq E}$ with $\dim(V)> c\min(E^d, DE^{d-1})$ such that $V\cap (P)_{\leq E}=0$. By Proposition \ref{discreteHamSandProp}, we can find a polynomial $Q\in V$ satisfying \eqref{propertiesOfQ}. Since $Q\in\RR[x_1,\ldots,x_d]_{\leq E}$ but $Q\notin (P)_{\leq E}$, we have $Q\notin (P)$. Since $P$ is irreducible and generates a real ideal, by Item \ref{nullz} of Proposition \ref{propertiesOfPrincipleRealIdealProp}, $Q$ does not vanish identically on $\BZ(P)$.
\end{proof}
\begin{proof}[Proof of Theorem \ref{variantHamSandwichThm}]
Use Lemma \ref{polyHamLemma} to find a polynomial $Q_1$ of degree $O(1)$ such that
\begin{equation*}
\begin{split}
&|\{x\in\RR^d\colon Q(x)>0\}\cap\pts |\leq |\pts|/2,\\
&|\{x\in\RR^d\colon Q(x)<0\}\cap\pts|\leq |\pts|/2.
\end{split}
\end{equation*}
Let $\mathcal Q_1=\{Q_1\}$. For each $i=2,\ldots, t,$ with
\begin{equation}\label{valueOfT}
t = \lceil \log_2(DE^{d-1})\rceil,
\end{equation}
use Lemma \ref{polyHamLemma} to find a polynomial $Q_{i}$ with
\begin{equation*}
\deg(Q_i)\lesssim \max \big( (2^i/D)^{1/(d-1)}, 2^{i/d}\big)
\end{equation*}
such that for each $\sigma\in\Sigma_{\mathcal Q_{i-1}}$ we have
\begin{equation}\label{cutSignConditionsInHalf}
\begin{split}
\Big|\{x\in\RR^d\colon Q_i(x)>0\}\cap \big(\pts\cap\reali(\sigma,\mathcal Q_{i-1})\big)\Big|&\leq\frac{1}{2}|\pts\cap\reali(\sigma,\mathcal Q_{i-1})|,\\
\Big|\{x\in\RR^d\colon Q_i(x)<0\}\cap \big(\pts\cap\reali(\sigma,\mathcal Q_{i-1})\big)\Big|&\leq\frac{1}{2}|\pts\cap\reali(\sigma,\mathcal Q_{i-1})|.
\end{split}
\end{equation}
Some of the above sets may be empty, but this does not pose a problem. Let $\mathcal Q_i = \mathcal Q_{i-1}\cup\{Q_i\}$.

None of the polynomials in $\mathcal Q =\mathcal Q_t$ vanish on $P$, so Item \ref{variantHamSandwichThmItm3} of the theorem is satisfied. Since $E>\rho D$ we have
\begin{align*}
\sum_{\mathcal Q}\deg Q &\lesssim \sum_{i=1}^t (2^i/D)^{1/(d-1)} + \sum_i 2^{i/d}\\
& \lesssim (DE^{d-1}/D)^{1/(d-1)} + (DE^{d-1})^{1/d}\\
& \lesssim E,
\end{align*}
which satisfies Item \ref{variantHamSandwichThmItm2}.
By \eqref{cutSignConditionsInHalf}, for each $\sigma\in\Sigma_{\mathcal Q}$,
\begin{equation}
\begin{split}
|\pts\cap\reali(\sigma,\mathcal Q)|&\lesssim 2^{-t}|\pts|\\
&\lesssim\frac{|\pts|}{DE^{d-1}},
\end{split}
\end{equation}
which satisfies Item \ref{variantHamSandwichThmItm4}. Finally, Item \ref{variantHamSandwichThmItm1} follows from \eqref{valueOfT}.
\end{proof}
\bibliographystyle{amsplain}

\end{document}